\renewenvironment{proof}{\noindent {\bfseries Proof.}}{\hfill $\Box$}
\def\@biblabel#1{#1.}
\newtheorem{theorem}{Theorem}[section]
\newtheorem{corollary}{Corollary}[section]
\newtheorem{lemma}{Lemma}[section]
\newtheorem{definition}{Definition}[section]
\newtheorem{remark}{Remark}[section]
\newtheorem{ex}{Example}[section]
\begin{document}

\title{Necessary Optimality Conditions for Higher-Order Infinite Horizon
Variational Problems on Time Scales}

\author{Nat\'{a}lia Martins$^{1}$
\and
Delfim F. M. Torres$^{2}$}

\date{(Communicated by Boris S. Mordukhovich)}


{\renewcommand{\thefootnote}{}

\footnotetext{{\bf This is a preprint of a paper whose final and definite 
form will appear in \emph{Journal of Optimization Theory and Applications}
(JOTA). Paper submitted 17-Nov-2011; revised 24-March-2012 and 10-April-2012;
accepted for publication 15-April-2012.}}


\footnotetext{This work was supported by FEDER funds through
COMPETE --- Operational Programme Factors of Competitiveness
(``Programa Operacional Factores de Competitividade'')
and by Portuguese funds through the
Center for Research and Development
in Mathematics and Applications (University of Aveiro)
and the Portuguese Foundation for Science and Technology
(``FCT --- Funda\c{c}\~{a}o para a Ci\^{e}ncia e a Tecnologia''),
within project PEst-C/MAT/UI4106/2011
with COMPETE number FCOMP-01-0124-FEDER-022690.
Torres was also supported by project PTDC/MAT/113470/2009.}}

\footnotetext[1]{Assistant Professor
in the Department of Mathematics, University of Aveiro, 3810-193 Aveiro, Portugal.
Senior Researcher in the Center for Research and Development in Mathematics and Applications,
Department of Mathematics, University of Aveiro, 3810-193 Aveiro, Portugal.
E-mail: natalia@ua.pt.}

\footnotetext[2]{Corresponding author.
Associate Professor with Habilitation in the Department of Mathematics,
University of Aveiro, 3810-193 Aveiro, Portugal.
Coordinator of the research group on Mathematical Theory of Systems and Control,
Center for Research and Development in Mathematics and Applications,
Department of Mathematics, University of Aveiro, 3810-193 Aveiro, Portugal.
E-mail: delfim@ua.pt.}


\maketitle


\begin{abstract}
We obtain Euler--Lagrange and transversality optimality conditions
for higher-order infinite horizon variational problems on a time scale.
The new necessary optimality conditions improve the classical
results both in the continuous and discrete settings:
our results seem new and interesting even in the particular cases
when the time scale is the set of real numbers or the set
of integers.

\bigskip

\noindent \textbf{Keywords:} time scales, calculus of variations,
infinite horizon problems, Euler--Lagrange equations,
transversality conditions.

\bigskip

\noindent \textbf{Mathematics Subject Classification 2010:}
34N05, 39A12, 49K05.
\end{abstract}


\section{Introduction}

We consider infinite horizon variational
problems on time scales, which consist in maximizing a delta integral
with a Lagrangian involving higher-order delta derivatives
on a given unbounded time scale.
Problems of the calculus of variations of such type
have many applications in economics both in discrete
(\textrm{i.e.}, when the time scale is the set of integers)
and continuous (\textrm{i.e.}, when the time scale is the set of real numbers)
time settings (see, \textrm{e.g.}, \cite{book:economics,livro:Sethi}).
Indeed, the dynamic processes of economics are usually described
with discrete or continuous models. The time scale approach
adopted here puts discrete and continuous models of economics together
and, most important, extends them to more realistic situations of
unequally spaced points in time (time-varying graininess).
Consider a typical situation of a consumer, that has to make decisions
concerning how much to consume and how much to spend with the goal
to maximize his lifetime utility subject to certain constraints.
Problems of the calculus of variations on time scales
provide a natural way to model such a consumer, that
has an income from different sources, at unequal
time intervals, and makes expenditures also at unequal time intervals \cite{Atici:2006}.
The reader interested on the usefulness of the calculus of variations on time scales in economics
is referred to \cite{Atici:2006,Ric:delfim:JVC,Atici:2008,Malinowska+Torres:comp}
and references therein.

Clearly, for infinite horizon variational problems, the delta integral
does not necessarily converge: it may diverge to plus or
minus infinity or it may oscillate.
In such situations, the extension of the standard definition of
optimality used in the time scale setting (see, \textrm{e.g.},
\cite{Bohner:2004,Rui+Barbara+Delfim})
to the unbounded time domain is not useful. Indeed,
if, for example, for every admissible function the value of the integral
functional is equal to plus infinity, then each admissible path could be called an
optimal path. To handle this and similar situations in a rigorous
way, several alternative definitions of optimality for problems with
unbounded time domain have been proposed in the literature (see,
\textrm{e.g.}, \cite{Brock,Gale,SS,Weiz}). In this paper, we follow
the notion of weakly optimal solution introduced by Brock
in the economic literature. In the case when
the variational functional converges for all admissible paths,
Brock's notion coincides with the standard definition of optimality.
Many results in infinite horizon optimal control
with this type of optimality can be found in the book \cite{MR2164615}.
For the method of discrete approximations, that allows to approximate continuous-time control problems
by those associated with discrete dynamics, we refer the reader to \cite{MR2191745}.

The goal of this paper is to provide
necessary optimality conditions
to higher-order infinite horizon variational problems on time scales.
Our main result is Theorem~\ref{main:result}.
It provides a nontrivial generalization of the recent results
of \cite{MMT-2010,naty:irlanda}. Moreover, Theorem~\ref{main:result}
improves the continuous results of Okomura et al. \cite{Nitta-et-all-2009}
when one chooses the time scale to be the set of real numbers,
while, in the particular case when the time scale is the set of integers,
it generalizes the discrete-time results of Cai and Nitta \cite{Nitta-et-all-2010}.

The paper is organized as follows. In Section~\ref{Preliminary results},
we present some preliminary results and basic definitions necessary in the sequel.
Main results are given in Section~\ref{sec:mr}:
in Section~\ref{Fundamental_Lemmas}, we prove some fundamental lemmas
of the calculus of variations for infinite horizon variational problems;
the Euler--Lagrange equation and the transversality conditions
for higher-order infinite horizon variational problems are proved
in Section~\ref{E-L_and_Transversality} and discussed in
Section~\ref{E-L_and_Transversality:disc}. We end the paper
with two illustrative examples (Section~\ref{sec:il:ex})
and a summary of the major results (Section~\ref{sec:conc}).


\section{Preliminaries}
\label{Preliminary results}

A time scale is an arbitrary, nonempty and closed subset
$\mathbb{T}$ of $\mathbb{R}$ (endowed with the topology of a
subspace of $\mathbb{R}$). In a time scale $\mathbb{T}$,
we consider the following two operators: the forward jump operator
$\sigma:\mathbb{T}\rightarrow\mathbb{T}$, defined by
$\sigma(t):=\inf{\{s\in\mathbb{T}:s>t}\}$ if $t\neq \sup
\mathbb{T}$ and $\sigma(\sup \mathbb{T}):=\sup \mathbb{T}$,
and the backward jump operator
$\rho:\mathbb{T}\rightarrow\mathbb{T}$, defined by
$\rho(t):=\sup{\{s\in\mathbb{T}:s<t}\}$ if $t\neq \inf \mathbb{T}$
and $\rho(\inf \mathbb{T}):=\inf \mathbb{T}$.
A point $t\in\mathbb{T}$ is called right-dense,
right-scattered, left-dense or left-scattered
if and only if $\sigma(t)=t$, $\sigma(t)>t$,
$\rho(t)=t$ or $\rho(t)<t$, respectively. We say that $t$ is
isolated if and only if $\rho(t)<t<\sigma(t)$, $t$ is dense if and only if
$\rho(t)=t=\sigma(t)$. The mapping $\mu:\mathbb{T}\rightarrow[0,+\infty[$ is
defined by $\mu(t):= \sigma(t)-t$ and is called the graininess function.

In order to introduce the definition of delta
derivative, we define a new set $\mathbb{T}^\kappa$.
If $\mathbb{T}$ has a left-scattered maximum $M$, then
$\mathbb{T}^\kappa=\mathbb{T}\setminus\{M\}$, otherwise,
$\mathbb{T}^\kappa= \mathbb{T}$.

\begin{definition}
We say that a function
$f:\mathbb{T}\rightarrow\mathbb{R}$ is delta differentiable
at $t\in\mathbb{T}^\kappa$ if and only if there is a number $f^{\Delta}(t)$
such that, for all $\varepsilon>0$, there exists a neighborhood $U$
of $t$ such that
$$
|f(\sigma(t))-f(s)-f^{\Delta}(t)(\sigma(t)-s)|
\leq\varepsilon|\sigma(t)-s| \mbox{ for all $s\in U$}.
$$
We call $f^{\Delta}(t)$ the delta derivative of $f$ at $t$.
Moreover, we say that $f$ is delta differentiable (or
$\Delta$-differentiable) on $\mathbb{T}$ provided
$f^{\Delta}(t)$ exists for all $t \in \mathbb{T}^\kappa$.
\end{definition}

\begin{remark}
If $\mathbb{T}=\mathbb{R}$, then
$f^{\Delta}=f^{\prime}$, where $f^{\prime}$ denotes the
usual derivative on $\mathbb{R}$. If $\mathbb{T}=\mathbb{Z}$, then
$f^{\Delta}=f(t+1)-f(t)$,  \textrm{i.e.}, $f^{\Delta}$ is the
usual forward difference. For any time scale $\mathbb{T}$,
if $f$ is a constant, then
$f^{\Delta}=0$; if $f(t)=k t$ for some constant $k$,
then $f^{\Delta}=k$.
\end{remark}

In order to simplify expressions, we denote the composition
$f\circ \sigma$ by $f^{\sigma}$.

\begin{theorem}{\rm \cite{Bohner-Peterson1}}
\label{propriedades derivada delta}
Let $\mathbb{T}$ be a time scale,
$f:\mathbb{T}\rightarrow\mathbb{R}$, and
$t\in\mathbb{T}^\kappa$. The following holds:
\begin{enumerate}
\item If $f$ is delta differentiable at $t$, then $f$ is
    continuous at $t$.
\item If $f$ is continuous at $t$ and $t$ is right-scattered,
    then $f$ is delta differentiable at $t$ and
$$f^{\Delta}(t)=\frac{f^{\sigma}(t)-f(t)}{\mu(t)}.$$

\item If $t$ is right-dense, then $f$ is delta differentiable
    at $t$ if and only if the limit
$$
\lim_{s\rightarrow t} \frac{f(t)-f(s)}{t-s}
$$
exists as a finite number. In this case,
$$
f^\Delta(t)=\lim_{s\rightarrow t} \frac{f(t)-f(s)}{t-s}.
$$
\item If $f$ is delta differentiable at $t$, then
$f^{\sigma}(t)=f(t)+\mu(t)f^\Delta(t)$.
\end{enumerate}
\end{theorem}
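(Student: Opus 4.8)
The proof is a matter of unwinding the $\varepsilon$--neighbourhood definition of the delta derivative stated above, handling the four assertions in turn; observe that for $t \in \mathbb{T}^\kappa$ the point $t$ is either right-scattered or right-dense, so items~2 and~4 split naturally into these two cases.

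For item~1, suppose $f$ is delta differentiable at $t$ and fix $\varepsilon \in (0,1)$. The plan is to invoke the defining inequality with the rescaled tolerance $\varepsilon^{*} := \varepsilon\big(1 + |f^{\Delta}(t)| + 2\mu(t)\big)^{-1} \in (0,1)$, which furnishes a neighbourhood $U$ of $t$ on which $|f^{\sigma}(t) - f(s) - f^{\Delta}(t)(\sigma(t)-s)| \le \varepsilon^{*}|\sigma(t)-s|$. Applying this once at a generic $s \in U$ with $|s-t| < \varepsilon^{*}$ and once at $s = t$ (which belongs to $U$), and subtracting the two expressions inside the absolute values, I expect to obtain $f(s) - f(t) = f^{\Delta}(t)(s-t) + r$ with $|r| \le \varepsilon^{*}\big(|\sigma(t)-s| + \mu(t)\big)$. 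The crude bounds $|s-t| < \varepsilon^{*}$ and $|\sigma(t)-s| \le \mu(t) + |t-s| < \mu(t) + 1$ then yield $|f(s) - f(t)| \le \varepsilon^{*}\big(|f^{\Delta}(t)| + 2\mu(t) + 1\big) = \varepsilon$, i.e.\ $f$ is continuous at $t$. This estimate is the only genuinely technical point of the proof.

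For item~2, assume $f$ is continuous at $t$ and $t$ is right-scattered, so $\mu(t) > 0$ and $s \neq \sigma(t)$ for $s$ near $t$. Since $f(s) \to f(t)$ and $\sigma(t) - s \to \mu(t) \neq 0$ as $s \to t$, the difference quotient $\big(f^{\sigma}(t) - f(s)\big)/(\sigma(t)-s)$ tends to $\big(f^{\sigma}(t) - f(t)\big)/\mu(t)$; hence, given $\varepsilon > 0$, there is a neighbourhood $U$ of $t$ on which $\big|\,(f^{\sigma}(t) - f(s))/(\sigma(t)-s) - (f^{\sigma}(t) - f(t))/\mu(t)\,\big| \le \varepsilon$, and multiplying through by $|\sigma(t)-s|$ recovers exactly the defining inequality for delta differentiability with derivative $\big(f^{\sigma}(t) - f(t)\big)/\mu(t)$. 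For item~3, if $t$ is right-dense then $\sigma(t) = t$, so the defining inequality reads $|f(t) - f(s) - f^{\Delta}(t)(t-s)| \le \varepsilon|t-s|$ for all $s \in U$; this is trivially true at $s = t$, and for $s \neq t$ it is, after dividing by $|t-s|$, precisely the assertion that $\lim_{s \to t}\big(f(t)-f(s)\big)/(t-s)$ exists as a finite number and equals $f^{\Delta}(t)$, which is the claim.

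Finally, item~4 is immediate: if $t$ is right-scattered, item~2 gives $f^{\Delta}(t) = \big(f^{\sigma}(t) - f(t)\big)/\mu(t)$, so $f^{\sigma}(t) = f(t) + \mu(t)f^{\Delta}(t)$; if $t$ is right-dense, then $\sigma(t) = t$ and $\mu(t) = 0$, so both sides equal $f(t)$. I foresee no difficulty in items~2--4; the care required is concentrated in the continuity estimate of item~1.
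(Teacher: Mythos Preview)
Your proof is correct and essentially follows the standard argument from Bohner and Peterson's textbook, which is exactly where the paper sources this result: the theorem is stated in the paper as a citation from \cite{Bohner-Peterson1} and is not proved there at all. So there is no ``paper's own proof'' to compare against; the authors simply invoke the reference.

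For the record, your treatment matches the textbook approach closely: the rescaled tolerance $\varepsilon^{*}$ in item~1 is precisely the device used in Bohner--Peterson (their Theorem~1.16), and your handling of items~2--4 by splitting into the right-scattered and right-dense cases is the standard route. One minor remark on item~4: you can also get it in one line directly from the definition by taking $s=t$ in the defining inequality, which gives $|f^{\sigma}(t)-f(t)-f^{\Delta}(t)\mu(t)|\le\varepsilon\mu(t)$ for every $\varepsilon>0$, hence equality; this avoids appealing to items~1 and~2. But your argument is perfectly fine as written.
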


\begin{definition}
Let $f,F:\mathbb{T}\rightarrow\mathbb{R}$.
Function $F$ is called a delta antiderivative of
$f$ if and only if $F^{\Delta}(t)=f(t)$
for all $t \in \mathbb{T}^\kappa$.
In this case we define the delta integral of $f$ from $a$
to $b$ ($a,b \in \mathbb{T}$) by
$$
\int_{a}^{b}f(t)\Delta t:=F(b)-F(a).
$$
\end{definition}

\begin{definition}
A function $f:\mathbb{T}\rightarrow\mathbb{R}$
is rd-continuous if and only if it is continuous at the
right-dense points and its left-sided limits exist (finite) at all
left-dense points. The set of all rd-continuous functions
$f:\mathbb{T}\rightarrow\mathbb{R}$ is denoted by
$C_{\textrm{rd}}(\mathbb{T}, \mathbb{R})$.
\end{definition}

\begin{theorem}{\rm \cite{Bohner-Peterson1}}
Every rd-continuous function $f:\mathbb{T}\rightarrow\mathbb{R}$ has a delta
antiderivative. In particular, if $a \in \mathbb{T}$, then the
function $F$ defined by
$$
F(t) := \int_{a}^{t}f(\tau)\Delta\tau, \quad t \in \mathbb{T} \, ,
$$
is a delta antiderivative of $f$.
\end{theorem}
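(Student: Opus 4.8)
The plan is to construct a delta antiderivative $F$ of $f$ explicitly and then verify the defining relation $F^\Delta = f$ pointwise on $\mathbb{T}^\kappa$, treating right-scattered and right-dense points separately; the ``in particular'' part of the statement is then essentially formal, since once one knows (from the time-scale mean value inequality, i.e.\ that $G^\Delta\equiv 0$ forces $G$ to be constant) that any two delta antiderivatives of $f$ differ by a constant, the map $t\mapsto\int_a^t f(\tau)\Delta\tau$ is well defined and equals $F(t)-F(a)$, hence is itself a delta antiderivative of $f$.

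The construction proceeds as follows. First, it suffices to produce, for each $b\in\mathbb{T}$ with $b>a$, a function on $[a,b]\cap\mathbb{T}$ whose delta derivative is $f$ there, because the pieces so obtained coincide on overlaps and glue into an antiderivative on $\{t\in\mathbb{T}: t\ge a\}$, the part $t\le a$ being symmetric. Fix such a $b$. Since $\mathbb{T}$ is closed, $[a,b]\setminus\mathbb{T}$ is an at most countable union of pairwise disjoint open intervals $(\alpha_i,\beta_i)$ with $\alpha_i,\beta_i\in\mathbb{T}$; the $\alpha_i$ are precisely the right-scattered points of $[a,b]\cap\mathbb{T}$, with $\sigma(\alpha_i)=\beta_i$ and $\mu(\alpha_i)=\beta_i-\alpha_i$. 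Extend $f$ to $\hat f:[a,b]\to\mathbb{R}$ by $\hat f(s):=f(s)$ for $s\in\mathbb{T}$ and $\hat f(s):=f(\alpha_i)$ for $s\in(\alpha_i,\beta_i)$. A routine check using rd-continuity shows that $f$, and hence $\hat f$, is bounded and that $\hat f$ is measurable, so $\hat f$ is Lebesgue integrable on $[a,b]$ and we may set $F(t):=\int_a^t\hat f(s)\,ds$ (ordinary Lebesgue integral) for $t\in[a,b]\cap\mathbb{T}$; being an indefinite integral, $F$ is continuous.

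To verify $F^\Delta=f$: if $t\in[a,b]\cap\mathbb{T}$ is right-scattered, say $t=\alpha_i$, then $F^\sigma(t)-F(t)=\int_{\alpha_i}^{\beta_i}\hat f(s)\,ds=(\beta_i-\alpha_i)f(\alpha_i)=\mu(t)f(t)$, so Theorem~\ref{propriedades derivada delta}(2) gives $F^\Delta(t)=f(t)$. If $t$ is right-dense, then $\frac{F(t)-F(s)}{t-s}=\frac{1}{t-s}\int_s^t\hat f(u)\,du$; because $t$ is right-dense, $f$ is continuous at $t$, and one shows $\hat f(u)\to f(t)$ as $u\to t$ within $[a,b]$ (for $u\in\mathbb{T}$ this is continuity of $f$ at $t$; for $u$ in a gap $(\alpha_i,\beta_i)$ one has $\alpha_i\to t$, whence $\hat f(u)=f(\alpha_i)\to f(t)$), so the averages converge to $f(t)$ and Theorem~\ref{propriedades derivada delta}(3) gives $F^\Delta(t)=f(t)$.

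The main obstacle is the verification at right-dense points: one must control $\hat f$ on the possibly infinitely many gaps of $\mathbb{T}$ that accumulate at $t$, and this is exactly where rd-continuity (continuity at right-dense points together with the existence of left limits at left-dense points) is indispensable — a merely regulated, non-rd-continuous $f$ would break this step. An equivalent, integral-free alternative, which is the route followed in \cite{Bohner-Peterson1}, replaces the explicit construction by the time-scale induction principle applied to the statement ``$f$ restricted to $[a,t]\cap\mathbb{T}$ admits a delta antiderivative'', with the right-dense and left-dense induction steps again being the delicate ones.
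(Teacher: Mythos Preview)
The paper does not prove this theorem: it is quoted verbatim from \cite{Bohner-Peterson1} in the preliminaries section, with no argument supplied, so there is no ``paper's own proof'' to compare against.

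Your construction is correct. The extension $\hat f$ is locally bounded and Borel (indeed Riemann integrable on each $[a,b]$, having at most countably many discontinuities), so $F(t)=\int_a^t\hat f$ is well defined; the right-scattered case is exactly as you wrote. At a right-dense $t$, note that rd-continuity forces \emph{two-sided} continuity of $f$ at $t$ within $\mathbb{T}$, so when $t$ is also left-dense the left averages converge to $f(t)$ as well, and when $t$ is left-scattered the $\mathbb{T}$-neighbourhoods of $t$ contain no $s<t$, so only the right side matters. One small slip: the gluing of the local antiderivatives does not actually require the ``differ by a constant'' fact you invoke, because your formula $F(t)=\int_a^t\hat f$ is already globally consistent (the gaps of $\mathbb{T}$ do not depend on the choice of $b$). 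Your closing remark is accurate: the proof in \cite{Bohner-Peterson1} avoids the Lebesgue extension and instead passes through regulated functions and the time-scale induction principle to obtain a pre-antiderivative, which is a genuinely different, integral-free route.
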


\begin{theorem}{\rm \cite{Bohner-Peterson1}}
\label{propriedades delta integral}
If $a,b,c \in \mathbb{T}$, $a
\le c \le b$, $\alpha \in \mathbb{R}$, and $f,g \in
C_{\textrm{rd}}(\mathbb{T}, \mathbb{R})$, then
\begin{enumerate}
\item $\displaystyle \int_{a}^{b}\left(f(t) + g(t) \right)
    \Delta t= \int_{a}^{b}f(t)\Delta t +
    \int_{a}^{b}g(t)\Delta t$;

\item $\displaystyle \int_{a}^{b} \alpha f(t)\Delta t =\alpha
    \int_{a}^{b}f(t)\Delta t$;

\item $\displaystyle \int_{a}^{b}  f(t)\Delta t = -
    \int_{b}^{a} f(t)\Delta t$;

\item $\displaystyle \int_{a}^{a}  f(t)\Delta t=0$;

\item $\displaystyle \int_{a}^{b}  f(t)\Delta t =
    \int_{a}^{c}  f(t)\Delta t + \int_{c}^{b} f(t)\Delta t$;

\item if $f(t)> 0$ for all $a < t \leq b$, then $
    \displaystyle \int_{a}^{b}  f(t)\Delta t > 0$;

\item If $f$ and $g$ are $\Delta$-differentiable, then

\begin{enumerate}

\item $\displaystyle \int_{a}^{b}f^\sigma(t)g^{\Delta}(t)\Delta t
=\left[(fg)(t)\right]_{t=a}^{t=b}-\int_{a}^{b}f^{\Delta}(t)g(t)\Delta t$;

\item $\displaystyle \int_{a}^{b}f(t)g^{\Delta}(t)\Delta t
=\left[(fg)(t)\right]_{t=a}^{t=b}-\int_{a}^{b}f^{\Delta}(t)g^\sigma(t)\Delta t$.
\end{enumerate}
\end{enumerate}
\end{theorem}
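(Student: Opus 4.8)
The plan is to derive every item from one structural fact --- by definition $\int_a^b f(t)\,\Delta t$ equals the increment $F(b)-F(a)$ of any delta antiderivative $F$ of $f$, and such antiderivatives exist for all the (rd-continuous) integrands that occur, by the preceding theorem --- together with two elementary auxiliary facts coming straight from the $\varepsilon$-definition of the delta derivative: linearity, $(f+g)^\Delta=f^\Delta+g^\Delta$ and $(\alpha f)^\Delta=\alpha f^\Delta$; and the Leibniz product rule in its two equivalent forms $(fg)^\Delta=f^\Delta g+f^\sigma g^\Delta=f g^\Delta+f^\Delta g^\sigma$. Granting these, items~1--5 are immediate. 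Choosing delta antiderivatives $F$ of $f$ and $G$ of $g$: $F+G$ is an antiderivative of $f+g$ and $\alpha F$ one of $\alpha f$, so items~1 and~2 reduce to $(F+G)(b)-(F+G)(a)=(F(b)-F(a))+(G(b)-G(a))$ and $(\alpha F)(b)-(\alpha F)(a)=\alpha(F(b)-F(a))$; items~3 and~4 are $F(b)-F(a)=-(F(a)-F(b))$ and $F(a)-F(a)=0$; and item~5 is the telescoping identity $F(b)-F(a)=(F(c)-F(a))+(F(b)-F(c))$, valid for any $a,c,b\in\mathbb{T}$.

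The only genuinely nontrivial item --- and where I expect the real work --- is item~6, strict positivity. I would isolate it as a monotonicity lemma for delta antiderivatives: under the assumed positivity of $f$, the antiderivative $F(t):=\int_a^t f(\tau)\,\Delta\tau$, which satisfies $F^\Delta=f$, strictly increases from $a$ to $b$, so that $\int_a^b f(t)\,\Delta t=F(b)-F(a)>0$. Such a lemma can be obtained from the mean value theorem on time scales, which bounds $(F(b)-F(a))/(b-a)$ below by a value of $F^\Delta$ on the interior, or, avoiding that theorem, directly by the induction principle on time scales: one shows that the set $\{t\in[a,b]_{\mathbb{T}}:F(t)\ge F(a)\}$ exhausts the time-scale interval $[a,b]_{\mathbb{T}}$ --- it is closed; at a right-scattered point one steps forward using Theorem~\ref{propriedades derivada delta}(2) and~(4), and at a right-dense point one uses the limit characterization of Theorem~\ref{propriedades derivada delta}(3) --- and then upgrades ``$\ge$'' to ``$>$''. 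The fiddly part is the endpoint bookkeeping: being careful about exactly which points of the time-scale interval the antiderivative ``sees'' and matching that to the positivity hypothesis on $f$.

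Finally, item~7 follows by integrating the two forms of the product rule over $[a,b]$. Since $fg$ is, by definition, a delta antiderivative of $(fg)^\Delta$, we have $\int_a^b (fg)^\Delta(t)\,\Delta t=[(fg)(t)]_{t=a}^{t=b}$. Writing $(fg)^\Delta=f^\Delta g+f^\sigma g^\Delta$ and splitting the integral via item~1 gives formula~(a), $\int_a^b f^\sigma(t)g^\Delta(t)\,\Delta t=[(fg)(t)]_{t=a}^{t=b}-\int_a^b f^\Delta(t)g(t)\,\Delta t$; writing instead $(fg)^\Delta=f g^\Delta+f^\Delta g^\sigma$ gives formula~(b), $\int_a^b f(t)g^\Delta(t)\,\Delta t=[(fg)(t)]_{t=a}^{t=b}-\int_a^b f^\Delta(t)g^\sigma(t)\,\Delta t$. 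The only precaution needed is that the separate integrals on the right exist, which holds as soon as $f^\Delta$ and $g^\Delta$ are rd-continuous.
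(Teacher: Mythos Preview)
The paper does not prove this theorem; it is quoted verbatim from Bohner and Peterson \cite{Bohner-Peterson1} as background, so there is no ``paper's own proof'' to compare against. Your sketch is the standard derivation of these properties from the definition of the delta integral via antiderivatives together with linearity and the product rule for the delta derivative, and it is essentially what one finds in \cite{Bohner-Peterson1}.

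One substantive caution on item~6: the interval in the positivity hypothesis is almost certainly a misprint. As stated --- $f(t)>0$ for $a<t\le b$ --- the conclusion is false: take $\mathbb{T}=\mathbb{Z}$, $a=0$, $b=2$, $f(0)=-10$, $f(1)=f(2)=1$; then the hypothesis holds but $\int_0^2 f(t)\,\Delta t=f(0)+f(1)=-9<0$. The correct hypothesis (and the one in \cite{Bohner-Peterson1}) is $f(t)>0$ for $a\le t<b$, since the delta integral over $[a,b]$ samples values of $f$ on $[a,b)_{\mathbb T}$, not on $(a,b]_{\mathbb T}$. Your remark about ``endpoint bookkeeping'' is exactly this issue; with the corrected interval your induction-principle argument goes through.
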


For more definitions, notations, and results concerning the theory of time scales, we refer
the reader to the books \cite{Bohner-Peterson1,Bohner-Peterson2}. In what follows, $\sigma$
denotes the forward jump operator and $\Delta$ is the delta derivative of a given time scale $\mathbb{T}$.
As usual, for $f:\mathbb{T}\rightarrow \mathbb{R}$ we define
$f^{\sigma^k}:=f\circ \sigma^k$, where
$\sigma^k:=\sigma \circ \sigma^{k-1}$, $k \in \mathbb{N}$, and $\sigma^0=id$.
We assume that $\mathbb{T}$ is a time scale
such that $\sup \mathbb{T} = +\infty$ and we
suppose that $a, T, T^\prime\in \mathbb{T}$ are such that $T^\prime \geq T > a$.
Let $r \in \mathbb{N}$ and $f^{\Delta^{0}}:=f$. The $r$th delta derivative
of $f:\mathbb{T}\rightarrow \mathbb{R}$ is the function
$f^{\Delta^r}:\mathbb{T}^{\kappa^r}\rightarrow \mathbb{R}$
defined by $f^{\Delta^r}:=\left(f^{\Delta^{r-1}}\right)^\Delta$,
provided $f^{\Delta^{r-1}}$ is delta differentiable.
By $\partial_i F$  we denote the partial derivative of
a function $F$ with respect to its $i$th argument.
All intervals are time scales intervals, that is, we simply write
$[a,b]$ and $[a,+\infty[$ to denote, respectively, the set
$[a,b]\cap \mathbb{T} $ and $[a,+\infty[ \, \cap \, \mathbb{T}$.
We consider the following
higher-order variational problem on $\mathbb{T}$:
\begin{equation}
\label{problem}
\begin{gathered}
\int_{a}^{+\infty} L\left(t,x^{\sigma^{r}}(t), x^{\sigma^{r-1}\Delta}(t), \ldots,
x^{\sigma\Delta^{r-1}}(t), x^{\Delta^{r}}(t)\right) \Delta t  \longrightarrow \max \\
x \in C^{2r}_{rd}\left([a,+\infty[,\mathbb{R}\right)\\
x^{\Delta^{i}}(a)=\alpha_{i}, \quad i = 0, \ldots, r-1,
\end{gathered}
\end{equation}
where $r \in \mathbb{N}$, $\alpha_0, \ldots, \alpha_{r-1}$ are fixed real numbers,
$(u_0,\ldots, u_{r})\rightarrow L(t,u_0,\ldots,u_{r})$ is a
$C^1(\mathbb{R}^{r+1}, \mathbb{R})$ function for any $t \in [a,+\infty[$,
and $\partial_{i+2} L \in C^{r}_{rd}([a,+\infty[,\mathbb{R})$ for all $i=0,\ldots,r$.

\begin{remark}
The results of this paper are trivially generalized for functions
$x:[a,+\infty[\rightarrow\mathbb{R}^n$ ($n \in \mathbb{N}$), but for simplicity of
presentation we restrict ourselves to the scalar case ($n=1$).
\end{remark}

\begin{definition}
We say that $x$ is an admissible function
for problem \eqref{problem} if and only if
$$
x \in C^{2r}_{rd}([a,+\infty[,\mathbb{R}) \text{ and }
x^{\Delta^{i}}(a)=\alpha_{i}, \, i = 0, \ldots, r-1.
$$
\end{definition}

As optimality criteria, we use the following
generalization of Brock's notion of optimality.

\begin{definition}
\label{def:weakMax}
Function $x_{\ast}$ is weakly maximal to problem
\eqref{problem} if and only if $x_{\ast}$ is admissible and
\begin{multline*}
\lim_{T\rightarrow+\infty}\inf_{T^\prime \geq
T}\int_{a}^{T^\prime}\left[L\left(t,x^{\sigma^{r}}(t), x^{\sigma^{r-1}\Delta}(t), \ldots,
x^{\sigma\Delta^{r-1}}(t), x^{\Delta^{r}}(t)\right)\right.\\
\left. - L\left(t,x_{\ast}^{\sigma^{r}}(t), x_{\ast}^{\sigma^{r-1}\Delta}(t), \ldots,
x_{\ast}^{\sigma\Delta^{r-1}}(t), x_{\ast}^{\Delta^{r}}(t)\right)\right]\Delta t \le 0
\end{multline*}
for all admissible function $x$.
\end{definition}

It is well known that, for certain time scales $\mathbb{T}$,
the forward jump operator $\sigma$ is not delta differentiable.
Furthermore, the chain rule, as we know it from the classical calculus, that is,
when $\mathbb{T}=\mathbb{R}$, is not valid in general.
However, if we suppose that the time scale $\mathbb{T}$
satisfies the condition
\begin{description}
\item[$(H)$] \quad \quad for each $t \in\mathbb{T}$, $(r-1) \left(\sigma(t) - a_1t - a_0\right) = 0$ for some
$a_1\in\mathbb{R}^+$ and $a_0\in\mathbb{R}$,
\end{description}
then we can deal with these two limitations as noted in Remark~\ref{rem:rest:H}
and Lemma~\ref{lemmaderivadacomposta}.

\begin{remark}
\label{rem:rest:H}
Condition $(H)$ is equivalent to $r = 1$ or $\sigma(t) = a_1t + a_0$
for some $a_1\in\mathbb{R}^+$ and $a_0\in\mathbb{R}$. Thus,
for the first order infinite horizon variational problem \cite{MMT-2010},
we impose no restriction on the time scale $\mathbb{T}$. For the higher-order problems
(\textrm{i.e.}, for $r \ge 2$) such restriction on the time scale is necessary.
Indeed, for $r > 1$ we are implicitly assuming in \eqref{problem}
that $\sigma$ be delta differentiable, which is not true for a general
time scale $\mathbb{T}$. Note that, for $r > 1$,
condition $(H)$ implies that $\sigma$ be delta
differentiable and $\sigma^{\Delta}(t)=a_1$,
$t \in \mathbb{T}$. Furthermore, note that
condition $(H)$ includes the following important cases:
the differential calculus ($\mathbb{T}=\mathbb{R}$, $a_1=1$, $a_0=0$);
the difference calculus ($\mathbb{T}=\mathbb{Z}$, $a_1=1$, $a_0=1$);
the $h$-calculus ($\mathbb{T}=h \mathbb{Z} := \{ h z: z \in
\mathbb{Z}\}$ for some $h>0$, $a_1=1$, $a_0=h$);
and the $q$-calculus ($\mathbb{T}= q^{\mathbb{N}_0} := \{ q^k: k \in
\mathbb{N}_0\}$ for some $q>1$, $a_1=q$, $a_0=0$).
\end{remark}

\begin{lemma}{\rm \cite{Rui+Delfim}}
\label{lemmaderivadacomposta}
Let $\mathbb{T}$ be a time scale satisfying condition $(H)$ and
$r>1$. If
$f:\mathbb{T}\rightarrow \mathbb{R}$ is two times delta
differentiable, then
$f^{\sigma\Delta}(t)=a_1 f^{\Delta\sigma}(t)$, $t\in\mathbb{T}$.
\end{lemma}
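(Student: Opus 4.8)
The plan is to reduce the claim to the product rule on time scales together with the fundamental identity $f^{\sigma}=f+\mu f^{\Delta}$ from Theorem~\ref{propriedades derivada delta}(4); no delicate estimates are involved.

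First I would invoke Remark~\ref{rem:rest:H}: since $\mathbb{T}$ satisfies $(H)$ and $r>1$, the forward jump operator is affine, $\sigma(t)=a_1 t+a_0$ with $a_1\in\mathbb{R}^+$, it is delta differentiable, and $\sigma^{\Delta}(t)=a_1$ for every $t\in\mathbb{T}$. Hence the graininess $\mu=\sigma-\mathrm{id}$ is delta differentiable as well, with $\mu^{\Delta}(t)=a_1-1$. Because $\sup\mathbb{T}=+\infty$, the set $\mathbb{T}$ has no maximum, so $\mathbb{T}^{\kappa}=\mathbb{T}^{\kappa^2}=\mathbb{T}$ and there are no endpoint subtleties: every formula below is valid for all $t\in\mathbb{T}$.

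Next I would apply Theorem~\ref{propriedades derivada delta}(4) to $f$, which is delta differentiable, to get $f^{\sigma}=f+\mu f^{\Delta}$ on $\mathbb{T}$. Since $f$ is two times delta differentiable, $f^{\Delta}$ is delta differentiable, and since $\mu$ is delta differentiable so is the product $\mu f^{\Delta}$; therefore $f^{\sigma}=f+\mu f^{\Delta}$ is delta differentiable, which in particular justifies writing $f^{\sigma\Delta}$. Differentiating and using linearity of $\Delta$ together with the product rule $(\mu f^{\Delta})^{\Delta}=\mu^{\Delta}(f^{\Delta})^{\sigma}+\mu (f^{\Delta})^{\Delta}$ yields
\[
f^{\sigma\Delta}=f^{\Delta}+\mu^{\Delta}f^{\Delta\sigma}+\mu f^{\Delta^{2}}
= f^{\Delta}+(a_1-1)f^{\Delta\sigma}+\mu f^{\Delta^{2}}.
\]
Finally I would apply Theorem~\ref{propriedades derivada delta}(4) once more, now to $f^{\Delta}$, obtaining $f^{\Delta\sigma}=f^{\Delta}+\mu f^{\Delta^{2}}$; substituting this into the previous display collapses it to $f^{\sigma\Delta}=f^{\Delta\sigma}+(a_1-1)f^{\Delta\sigma}=a_1 f^{\Delta\sigma}$, which is the asserted identity.

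The calculation itself is short, so the only point deserving care — and precisely the reason hypothesis $(H)$ is imposed when $r>1$ — is that one must know beforehand that $\sigma$, and hence $\mu$, is delta differentiable, so that the product rule applies to $\mu f^{\Delta}$; for a general time scale $\sigma$ need not be delta differentiable and the identity may fail, and $(H)$ is exactly the condition that removes this obstacle. As a sanity check one can also verify the formula directly in the cases $\mathbb{T}=\mathbb{R}$, $\mathbb{T}=h\mathbb{Z}$ and $\mathbb{T}=q^{\mathbb{N}_0}$, where it reads $f^{\sigma\Delta}=a_1 f^{\Delta\sigma}$ with $a_1=1$, $1$ and $q$, respectively.
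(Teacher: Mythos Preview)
Your argument is correct. The key steps --- computing $\mu^{\Delta}=a_1-1$ from the affinity of $\sigma$ guaranteed by $(H)$ with $r>1$, differentiating the identity $f^{\sigma}=f+\mu f^{\Delta}$ via the product rule, and then collapsing the result using $f^{\Delta\sigma}=f^{\Delta}+\mu f^{\Delta^{2}}$ --- are all valid, and you are right to flag that the differentiability of $\sigma$ (hence of $\mu$) is exactly what condition $(H)$ supplies and what makes the product rule applicable to $\mu f^{\Delta}$.

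As for comparison: the paper does not actually prove Lemma~\ref{lemmaderivadacomposta}; it simply quotes the result from \cite{Rui+Delfim}. Your proof is therefore a genuine contribution over what the paper offers, supplying a short self-contained derivation in place of an external citation. The argument in \cite{Rui+Delfim} proceeds along essentially the same lines (exploiting $f^{\sigma}=f+\mu f^{\Delta}$ and the fact that $\mu^{\Delta}$ is constant under $(H)$), so your approach matches the standard one.
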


The next lemma will be very useful for the proof of our higher-order
fundamental lemmas of the calculus of variations on time scales
(more precisely, will be useful for Lemma~\ref{Fund.Lemma.1} and Lemma~\ref{Fund.Lemma.3}).
An analogous nabla version can be found in \cite{Martins+Torres-2009}.

\begin{lemma}
\label{lemma_funcoes_admissiveis_1}
Assume that the time scale $\mathbb{T}$ satisfies condition $(H)$
and $\eta \in C^{2r}_{rd}([a,+\infty[, \mathbb{R})$ is such that
$\eta^{\Delta ^{i}}(a)=0$ for all $i=0,\ldots, r$.
Then, $\eta^{\sigma \Delta^{i-1}}(a)=0$ for each $i=1,\ldots, r$.
\end{lemma}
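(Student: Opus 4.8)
The plan is to prove the statement by re\-indexing and then invoking a commutation formula between $\sigma$ and $\Delta$ that is available under condition $(H)$. Writing $j=i-1$, the assertion $\eta^{\sigma\Delta^{i-1}}(a)=0$ for $i=1,\ldots,r$ is exactly $(\eta^{\sigma})^{\Delta^{j}}(a)=0$ for $j=0,1,\ldots,r-1$. The case $j=0$ is immediate from Theorem~\ref{propriedades derivada delta}(4) applied to $f=\eta$: indeed $\eta^{\sigma}(a)=\eta(a)+\mu(a)\,\eta^{\Delta}(a)=0$ because $\eta(a)=\eta^{\Delta}(a)=0$ by hypothesis. In particular this already disposes of the case $r=1$, for which $(H)$ imposes nothing and no further argument is needed.

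For $r>1$, I would first establish, by induction on $j$, the \emph{commutation formula}
$$(\eta^{\sigma})^{\Delta^{j}} = a_{1}^{\,j}\,(\eta^{\Delta^{j}})^{\sigma}, \qquad j=0,1,\ldots,r-1,$$
valid on all of $\mathbb{T}$. The base case $j=0$ is trivial. For the inductive step, assuming the identity for some $j$ with $0\le j\le r-2$, differentiate once more: $(\eta^{\sigma})^{\Delta^{j+1}}=\bigl(a_{1}^{\,j}(\eta^{\Delta^{j}})^{\sigma}\bigr)^{\Delta}=a_{1}^{\,j}\bigl((\eta^{\Delta^{j}})^{\sigma}\bigr)^{\Delta}$, and then apply Lemma~\ref{lemmaderivadacomposta} to the function $f:=\eta^{\Delta^{j}}$. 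This is legitimate because condition $(H)$ holds, $r>1$, and $f$ is two times delta differentiable: since $\eta\in C^{2r}_{rd}([a,+\infty[,\mathbb{R})$ is $2r$ times delta differentiable and $j+2\le r\le 2r$. Lemma~\ref{lemmaderivadacomposta} gives $\bigl((\eta^{\Delta^{j}})^{\sigma}\bigr)^{\Delta}=a_{1}\,(\eta^{\Delta^{j+1}})^{\sigma}$, whence $(\eta^{\sigma})^{\Delta^{j+1}}=a_{1}^{\,j+1}(\eta^{\Delta^{j+1}})^{\sigma}$, completing the induction.

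Finally, I would evaluate the commutation formula at $t=a$. Applying Theorem~\ref{propriedades derivada delta}(4) to $f=\eta^{\Delta^{j}}$ (which is delta differentiable since $j\le r-1\le 2r-1$) yields $(\eta^{\Delta^{j}})^{\sigma}(a)=\eta^{\Delta^{j}}(a)+\mu(a)\,\eta^{\Delta^{j+1}}(a)$. For $0\le j\le r-1$ we have $j+1\le r$, so both $\eta^{\Delta^{j}}(a)$ and $\eta^{\Delta^{j+1}}(a)$ vanish by hypothesis; hence $(\eta^{\sigma})^{\Delta^{j}}(a)=a_{1}^{\,j}\cdot 0=0$, that is, $\eta^{\sigma\Delta^{i-1}}(a)=0$ for $i=1,\ldots,r$, as claimed.

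I expect the only delicate point to be the bookkeeping: checking that the regularity $\eta\in C^{2r}_{rd}$ is exactly what is needed to iterate Lemma~\ref{lemmaderivadacomposta} through the required orders (all intermediate delta derivatives up to order $r+1\le 2r$ must exist), and keeping straight the two index ranges, namely $j\le r-1$ in the conclusion versus $j\le r-2$ in the inductive step. Everything else reduces to repeated, routine use of Theorem~\ref{propriedades derivada delta}(4) and Lemma~\ref{lemmaderivadacomposta}.
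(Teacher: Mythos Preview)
Your proof is correct and follows essentially the same approach as the paper's: both establish the iterated commutation identity $\eta^{\sigma\Delta^{i-1}} = a_1^{\,i-1}(\eta^{\Delta^{i-1}})^\sigma$ via Lemma~\ref{lemmaderivadacomposta} and then show $(\eta^{\Delta^{i-1}})^\sigma(a)=0$ from the hypotheses. The only cosmetic difference is that the paper obtains the latter by splitting into the cases $a$ right-dense versus $a$ right-scattered, whereas your uniform use of Theorem~\ref{propriedades derivada delta}(4) handles both cases at once.
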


\begin{proof}
If $a$ is right-dense, then the result is trivial
(just use Lemma~\ref{lemmaderivadacomposta} and the fact that $\sigma(a)=a$).
Suppose that $a$ be right-scattered and fix $i \in \{1,\ldots, r\}$. Since
$$
\eta^{\Delta^{i}}(a)
= \left(\eta^{\Delta^{i-1}}\right)^{\Delta}(a)
= \frac{
\left(\eta^{\Delta^{i-1}}\right)^\sigma(a)- \eta^{\Delta^{i-1}}(a)}{\sigma(a)-a},
$$
$\eta^{\Delta^{i}}(a)=0$, and $\eta^{\Delta^{i-1}}(a)=0$, then
$\left(\eta^{\Delta^{i-1}}\right)^\sigma(a)=0$.
By Lemma~\ref{lemmaderivadacomposta},
$$
\eta^{\sigma \Delta^{i-1}}(a) =(a_1)^{i-1} \left(\eta^{\Delta^{i-1}}\right)^\sigma(a),
$$
proving that $\eta^{\sigma\Delta^{i-1}}(a)=0$.
\end{proof}

We end this section recalling a result
that will be needed in the proof
of our Theorem~\ref{main:result}.

\begin{theorem}{\rm \cite{Serge:Lang}}
\label{Serge:Lang}
Let $S$ and $T$ be subsets of a  normed vector space. Let $f$ be a
map defined on $T \times S$, having values in some complete normed
vector space. Let $v$ be adherent to $S$ and $w$ adherent to $T$.
Assume that:
\begin{enumerate}
\item $\lim_{x\rightarrow v} f(t,x)$ exists for each $t \in T$;

\item $\lim_{t\rightarrow w} f(t,x)$ exists uniformly for  $x \in S$.
\end{enumerate}
Then the limits
$\lim_{t\rightarrow w} \lim_{x\rightarrow v} f(t,x)$,
$\lim_{x\rightarrow v} \lim_{t\rightarrow w}f(t,x)$,
and $\lim_{(t,x)\rightarrow (w,v)} f(t,x)$
all exist and are equal.
\end{theorem}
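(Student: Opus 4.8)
The plan is to prove this Moore--Osgood-type interchange theorem in three stages: first, identify the prospective common value $\ell$ as the limit $\lim_{t\rightarrow w} g(t)$ of the ``inner'' limit function $g(t):=\lim_{x\rightarrow v} f(t,x)$ (which exists by hypothesis~(1)); second, show that the \emph{joint} limit $\lim_{(t,x)\rightarrow(w,v)} f(t,x)$ exists and equals $\ell$; third, deduce the other iterated limit $\lim_{x\rightarrow v}\lim_{t\rightarrow w} f(t,x)=\ell$, where the relevant inner limit is $h(x):=\lim_{t\rightarrow w} f(t,x)$, well defined by~(2). Here ``neighborhood of $w$'' is understood relative to $T$ and ``neighborhood of $v$'' relative to $S$; since $w$ is adherent to $T$ and $v$ to $S$, every such neighborhood meets its set, so all the limits under discussion make sense.

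The key step, and the one I expect to be the main obstacle, is showing that $\lim_{t\rightarrow w} g(t)$ exists, since this is the only place where completeness of the target space is used. Fix $\varepsilon>0$. By the \emph{uniform} convergence in~(2) there is a neighborhood $V$ of $w$ with $\|f(t,x)-h(x)\|<\varepsilon/3$ for all $t\in V$ and all $x\in S$, hence $\|f(t,x)-f(t',x)\|<2\varepsilon/3$ for all $t,t'\in V$ and all $x\in S$. Letting $x\rightarrow v$ and using~(1) gives $\|g(t)-g(t')\|\le 2\varepsilon/3<\varepsilon$ for all $t,t'\in V$. Thus $g$ satisfies a Cauchy condition as $t\rightarrow w$; choosing points $t_n$ in a decreasing sequence of such neighborhoods, $(g(t_n))$ is a Cauchy sequence in the complete target space, hence convergent to some $\ell$, and a routine estimate then shows $g(t)\rightarrow\ell$ as $t\rightarrow w$. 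In particular $\lim_{t\rightarrow w}\lim_{x\rightarrow v} f(t,x)=\ell$.

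Next I would establish $\lim_{(t,x)\rightarrow(w,v)} f(t,x)=\ell$ by a four-term triangle-inequality estimate. Given $\varepsilon>0$, choose a neighborhood $V$ of $w$ so that $\|f(t,x)-h(x)\|<\varepsilon/4$ for all $t\in V$, $x\in S$, and, shrinking it if necessary, also $\|g(t)-\ell\|<\varepsilon/4$ for all $t\in V$. Fix one $t_0\in V$; since $f(t_0,x)\rightarrow g(t_0)$ as $x\rightarrow v$, there is a neighborhood $W$ of $v$ with $\|f(t_0,x)-g(t_0)\|<\varepsilon/4$ for all $x\in W$. Then, for $(t,x)\in V\times W$,
\begin{align*}
\|f(t,x)-\ell\| &\le \|f(t,x)-h(x)\|+\|h(x)-f(t_0,x)\| \\
&\quad +\|f(t_0,x)-g(t_0)\|+\|g(t_0)-\ell\| < \varepsilon .
\end{align*}
Finally, to obtain the remaining iterated limit, fix $x\in W$ in the inequality above and let $t\rightarrow w$, using~(2), which yields $\|h(x)-\ell\|\le\varepsilon$; since $\varepsilon$ is arbitrary and such $W$ can be taken for every $\varepsilon$, it follows that $h(x)\rightarrow\ell$ as $x\rightarrow v$, that is, $\lim_{x\rightarrow v}\lim_{t\rightarrow w} f(t,x)=\ell$. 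Hence the three limits all exist and are equal. The only genuinely nontrivial point is the completeness argument in the first stage; everything else is careful bookkeeping with nested neighborhoods and the triangle inequality.
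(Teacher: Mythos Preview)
Your proof is correct and is essentially the standard Moore--Osgood argument; however, note that the paper itself does not prove this theorem. It is stated as Theorem~\ref{Serge:Lang} with a citation to Lang's \emph{Undergraduate Analysis} and is used as a black box in the proof of Theorem~\ref{main:result}, so there is no in-paper proof to compare against. Your argument matches the classical textbook proof: use the uniform limit to get a Cauchy condition on $g(t)=\lim_{x\to v}f(t,x)$, invoke completeness to produce the candidate limit $\ell$, then verify the joint and the remaining iterated limit by triangle-inequality bookkeeping.
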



\section{Main Results}
\label{sec:mr}

We prove a first-order necessary optimality condition
for higher-order infinite horizon variational problems on time scales.
For simplicity of expressions, we introduce the operator
$\langle\cdot\rangle^r$ defined by
$\langle x\rangle^r(t):=
\left(t,x^{\sigma^{r}}(t), x^{\sigma^{r-1}\Delta}(t),
\ldots, x^{\sigma\Delta^{r-1}}(t), x^{\Delta^{r}}(t)\right)$.

\begin{theorem}[Euler--Lagrange Equation and Transversality Conditions]
\label{main:result}
Let $\mathbb{T}$ be a time scale satisfying condition $(H)$
and such that $\sup \mathbb{T}=+\infty$.
Suppose that $x_{\ast}$ be a maximizer to problem
\eqref{problem} and let $\eta \in C^{2r}_{rd}([a,+\infty[,\mathbb{R})$
be such that $\eta(a)=0$, $\eta^{\Delta}(a)=0$, \ldots,
$\eta^{\Delta^{r-1}}(a)=0$. Define
\begin{equation*}
\begin{split}
 A(\varepsilon, T^\prime) &:= \int_{a}^{T^\prime}
\frac{L\langle x_{\ast} + \epsilon\eta \rangle^r(t)
- L\langle x_{\ast}\rangle^r(t)}{\epsilon} \Delta t,\\
V(\varepsilon, T) &:= \inf_{T^\prime \geq
T}\int_{a}^{T^\prime} \Big(L\langle x_{\ast} + \epsilon\eta \rangle^r(t)
- L\langle x_{\ast}\rangle^r(t)\Big) \Delta t,\\
V(\varepsilon)&:= \lim_{T\rightarrow+\infty} V(\varepsilon, T).
\end{split}
\end{equation*}
Suppose that
\begin{enumerate}

\item $\displaystyle \lim_{\varepsilon \rightarrow 0}
\frac{V(\varepsilon, T) }{\varepsilon}$ exists for all $T$;

\item $\displaystyle \lim_{T\rightarrow+\infty}
\frac{V(\varepsilon, T) }{\varepsilon}$ exists uniformly for $\varepsilon$;

\item For every $T^\prime > a$, $T > a$,
and $\varepsilon\in \mathbb{R}\setminus\{0\}$,
there exists a sequence $\left(A(\varepsilon,
T^\prime_n)\right)_{n \in \mathbb{N}}$ such that
$$
\displaystyle \lim_{n \rightarrow +\infty} A(\varepsilon, T^\prime_n)
= \displaystyle \inf_{T^\prime \geq T} A(\varepsilon, T^\prime)
$$
uniformly for $\varepsilon$.
\end{enumerate}
Then $x_{\ast}$ satisfies the Euler--Lagrange equation
\begin{equation}
\label{E-L-equation}
\sum_{i=0}^{r} (-1)^i
\left(\frac{1}{a_1}\right)^{\frac{i(i-1)}{2}}\left(\partial_{i+2} L\right)^{\Delta^i}
\langle x \rangle^r(t) =0
\end{equation}
for all $t \in [a,+\infty[$ and the $r$ transversality conditions
\begin{multline}
\label{tranversality}
\displaystyle \lim_{T\rightarrow+\infty} \inf_{T^\prime \geq T}
\Biggl\{\left( \partial_{r+2-(k-1)} L\langle x \rangle^r(T^\prime) + \sum_{i=1}^{k-1} (-1)^{i}
\left(\partial_{r+2-(k-1)+i} L\right)^{\Delta^i} \langle x \rangle^r(T^\prime)
\cdot \Psi_i^r(k)\right) \\
\times x^{\sigma^{k-1}\Delta^{r-k}}(T^\prime)\Biggr\} =0,
\end{multline}
$k=1,\ldots,r$, with
$\displaystyle \Psi_i^r(k) = \prod_{j=1}^{i}\left(\frac{1}{a_1}\right)^{r-(k-1)+(j-1)}$.
\end{theorem}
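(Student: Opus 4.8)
The plan is to obtain the Euler--Lagrange equation and the $r$ transversality conditions by the classical variational strategy of forming a one-parameter family $x_\ast + \varepsilon\eta$ of admissible perturbations, computing the first variation, and passing to the limit in a way compatible with the weak optimality criterion. Let me sketch the steps.

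\medskip

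\textbf{Step 1: set up the variation and use weak maximality.}
Since $x_\ast$ is a maximizer and $\eta$ satisfies $\eta^{\Delta^i}(a)=0$ for $i=0,\dots,r-1$, the function $x_\ast+\varepsilon\eta$ is admissible for every $\varepsilon\in\mathbb{R}$. By Definition~\ref{def:weakMax}, for every such $\eta$ we have $V(\varepsilon)\le 0$ for all $\varepsilon$, where $V(\varepsilon)=\lim_{T\to+\infty}V(\varepsilon,T)$ as defined in the statement. Since $V(0)=0$ and $V(\varepsilon)\le 0$ nearby, if $\varepsilon\mapsto V(\varepsilon)$ is differentiable at $\varepsilon=0$ its derivative must vanish: $V'(0)=0$. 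The content of hypotheses (1) and (2) is exactly to license the interchange of the two limits $\varepsilon\to 0$ and $T\to+\infty$ via Theorem~\ref{Serge:Lang} applied to $f(T,\varepsilon)=V(\varepsilon,T)/\varepsilon$ (with $S$ a punctured neighborhood of $0$, $T$ a neighborhood of $+\infty$, $v=0$, $w=+\infty$). This gives
$$
0 = V'(0) = \lim_{\varepsilon\to 0}\frac{V(\varepsilon)}{\varepsilon}
= \lim_{T\to+\infty}\lim_{\varepsilon\to 0}\frac{V(\varepsilon,T)}{\varepsilon}.
$$
Hypothesis (3), again via Theorem~\ref{Serge:Lang} applied to $A(\varepsilon,T'_n)$, lets us identify $\lim_{\varepsilon\to 0}V(\varepsilon,T)/\varepsilon$ with $\inf_{T'\ge T}\lim_{\varepsilon\to 0}A(\varepsilon,T')/\varepsilon$, i.e.\ with $\inf_{T'\ge T}\frac{d}{d\varepsilon}\big|_{\varepsilon=0}\int_a^{T'}L\langle x_\ast+\varepsilon\eta\rangle^r\,\Delta t$.

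\medskip

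\textbf{Step 2: compute the first variation on $[a,T']$ and integrate by parts $r$ times.}
Differentiating under the (finite) delta integral and using the chain rule together with Lemma~\ref{lemmaderivadacomposta} (which gives $f^{\sigma\Delta}=a_1 f^{\Delta\sigma}$, hence $\sigma^\Delta\equiv a_1$ under $(H)$, and more generally relates $\eta^{\sigma^j\Delta^k}$ to $a_1$-powers times $\eta^{\Delta^k\sigma^j}$), the derivative at $\varepsilon=0$ of $\int_a^{T'}L\langle x_\ast+\varepsilon\eta\rangle^r\,\Delta t$ is a sum $\sum_{i=0}^r\int_a^{T'}\partial_{i+2}L\langle x_\ast\rangle^r(t)\cdot\eta^{\sigma^{r-i}\Delta^i}(t)\,\Delta t$, up to the $a_1$-powers bookkeeping. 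Now I integrate each term by parts $i$ times using Theorem~\ref{propriedades delta integral}(7), moving all delta derivatives off $\eta$ and onto $\partial_{i+2}L$; this produces the interior term $(-1)^i(1/a_1)^{i(i-1)/2}(\partial_{i+2}L)^{\Delta^i}\langle x_\ast\rangle^r$ multiplying $\eta$ (the factor $(1/a_1)^{i(i-1)/2}$ coming from collecting $\sigma^\Delta=a_1$ factors that appear each time a $\sigma$ is commuted past a $\Delta$ during integration by parts), plus boundary terms at $a$ and at $T'$. Lemma~\ref{lemma_funcoes_admissiveis_1} guarantees that $\eta^{\sigma\Delta^{i-1}}(a)=0$ for all relevant $i$, so every boundary contribution at the lower endpoint $a$ vanishes. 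The boundary contributions at $T'$ are precisely the expressions appearing inside the braces of \eqref{tranversality}, grouped by the order $k$ of the surviving derivative of $\eta$ (equivalently, of $x$). Collecting,
$$
\lim_{\varepsilon\to 0}\frac{A(\varepsilon,T')}{\varepsilon}
= \int_a^{T'}\!\Big(\textstyle\sum_{i=0}^r(-1)^i(1/a_1)^{i(i-1)/2}(\partial_{i+2}L)^{\Delta^i}\langle x_\ast\rangle^r\Big)\eta\,\Delta t
\; + \; (\text{boundary terms at }T').
$$

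\medskip

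\textbf{Step 3: separate the Euler--Lagrange equation from the transversality conditions.}
First restrict attention to variations $\eta$ supported away from any neighborhood of $+\infty$, so that the boundary terms at $T'$ vanish for all large $T'$; then $\inf_{T'\ge T}(\cdots)$ is just the fixed integral, and letting $T\to+\infty$ the relation $0=V'(0)$ forces $\int_a^{+\infty}(\cdots)\eta\,\Delta t=0$ for all such $\eta$. An appropriate fundamental lemma of the calculus of variations for infinite horizon problems (the ``Fundamental Lemmas'' announced in the paper, of DuBois--Reymond type on time scales) then yields the Euler--Lagrange equation \eqref{E-L-equation} pointwise on $[a,+\infty[$. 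With \eqref{E-L-equation} in force, the interior integral drops out of $\lim_{\varepsilon\to 0}A(\varepsilon,T')/\varepsilon$ entirely, leaving only the $T'$-boundary terms; plugging back into $0=V'(0)=\lim_{T\to+\infty}\inf_{T'\ge T}(\text{boundary terms})$ and then choosing, for each fixed $k\in\{1,\dots,r\}$, variations $\eta$ for which only the order-$(k{-}1)$ term survives (possible since the $\eta^{\sigma^{k-1}\Delta^{r-k}}(T')$ at the free endpoint can be prescribed independently), one isolates each of the $r$ transversality conditions \eqref{tranversality}, with the constant $\Psi_i^r(k)=\prod_{j=1}^i(1/a_1)^{r-(k-1)+(j-1)}$ emerging from the same $\sigma^\Delta=a_1$ bookkeeping restricted to the boundary terms.

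\medskip

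\textbf{Main obstacle.}
The genuinely delicate part is Step 1 --- justifying the two limit interchanges. Unlike the finite-horizon case, one cannot simply differentiate under the integral and be done: the $T'\to+\infty$ limit interacts nontrivially with $\varepsilon\to 0$, the functional may diverge or oscillate, and $\inf_{T'\ge T}$ is not a limit. Hypotheses (1)--(3) are tailored precisely so that Theorem~\ref{Serge:Lang} applies, but verifying that the uniform-convergence hypothesis of that theorem is met by $V(\varepsilon,T)/\varepsilon$ and by $A(\varepsilon,T'_n)$ --- and correctly passing the infimum through the limit in $\varepsilon$ --- is where the real care is needed; the rest (Steps 2--3) is the standard, if lengthy, integration-by-parts bookkeeping made somewhat heavier by the $a_1$-factors from condition $(H)$ and Lemma~\ref{lemmaderivadacomposta}.
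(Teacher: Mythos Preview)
Your overall strategy matches the paper's: use weak maximality plus hypotheses (1)--(3) and Theorem~\ref{Serge:Lang} to justify the limit interchanges and arrive at
\[
\lim_{T\to+\infty}\inf_{T'\ge T}\int_a^{T'}\sum_{i=0}^r\partial_{i+2}L\langle x_\ast\rangle^r(t)\,\eta^{\sigma^{r-i}\Delta^i}(t)\,\Delta t=0,
\]
then integrate by parts and split into interior and boundary pieces. The paper does exactly this, packaging the integration-by-parts and the separation arguments into Lemmas~\ref{integration-parts-higher-order}, \ref{Fund.Lemma.1}, \ref{first-transversality}, and \ref{Fund.Lemma.3}.

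There is, however, one genuine gap in your Step~3. The boundary terms coming out of integration by parts involve $\eta^{\sigma^{k-1}\Delta^{r-k}}(T')$, \emph{not} $x^{\sigma^{k-1}\Delta^{r-k}}(T')$; your parenthetical ``(equivalently, of $x$)'' is not justified. What you actually obtain (this is the content of Lemma~\ref{Fund.Lemma.3}) is
\[
\lim_{T\to+\infty}\inf_{T'\ge T}\Bigl\{\bigl(\cdots\bigr)\cdot\eta^{\sigma^{k-1}\Delta^{r-k}}(T')\Bigr\}=0
\]
for each $k$, and you still have to pass from this to the stated condition \eqref{tranversality} with $x_\ast$ in place of $\eta$. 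Your suggestion of ``prescribing $\eta^{\sigma^{k-1}\Delta^{r-k}}(T')$ independently'' does not do this: the condition involves $\lim_{T\to\infty}\inf_{T'\ge T}$ over \emph{all} large $T'$ simultaneously, so a choice of $\eta$ tailored to a single $T'$ is not enough. The paper closes this gap by a specific choice of variation: take $\eta(t)=\alpha(t)\,x_\ast(t)$, where $\alpha\in C^{2r}_{rd}$ satisfies $\alpha^{\Delta^i}(a)=0$ for $i=0,\dots,r-1$ and $\alpha(t)\equiv\beta\ne 0$ for all $t>T_0$. Then for every $T'$ sufficiently large one has $\eta^{\sigma^{k-1}\Delta^{r-k}}(T')=\beta\,x_\ast^{\sigma^{k-1}\Delta^{r-k}}(T')$, and dividing by $\beta$ yields \eqref{tranversality}. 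You should add this final step.
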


The proof of Theorem~\ref{main:result}
is given in \S\ref{E-L_and_Transversality}.
Before that we state and prove several useful auxiliary results.
In particular, we  prove in \S\ref{Fundamental_Lemmas}
a higher-order integration by parts formula
(Lemma~\ref{integration-parts-higher-order}) and
three higher-order fundamental lemmas of the calculus of variations
on time scales (Lemmas~\ref{Fund.Lemma.1},
\ref{first-transversality} and \ref{Fund.Lemma.3}).


\subsection{Fundamental Lemmas}
\label{Fundamental_Lemmas}

In our results we use the standard convention
that $\sum_{k=1}^{j} \gamma(k) = 0$ whenever $j=0$.

\begin{lemma}[Higher-order integration by parts formula]
\label{integration-parts-higher-order}
Let $r \in \mathbb{N}$, $\mathbb{T}$ be a time scale satisfying condition
$(H)$, $a,b\in \mathbb{T}$, $a<b$, $f\in C^{r}_{rd}([a,\sigma^r(b)],\mathbb{R})$,
and $g\in C^{2r}_{rd}([a,\sigma^r(b)],\mathbb{R})$.
For each $i=1, \ldots, r$ we have
\begin{multline*}
\int_a^b f(t) g^{\sigma^{r-i}\Delta^{i}}(t) \Delta t
= \left[ f(t) g^{\sigma^{r-i}\Delta^{i-1}} (t)
+ \sum_{k=1}^{i-1}(-1)^k f^{\Delta^k}(t) g^{\sigma^{r-i+k}\Delta^{i-1-k}}(t)
\cdot \prod_{j=1}^{k}\left(\frac{1}{a_1}\right)^{i-j}\right]_{a}^{b}\\
+ (-1)^i \int_a^b \left(\frac{1}{a_1}\right)^{\frac{i(i-1)}{2}}
f^{\Delta^{i}} (t)  g^{\sigma^{r}}(t)\Delta t.
\end{multline*}
\end{lemma}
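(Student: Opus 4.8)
The plan is to prove the formula by induction on $i$, using the single-step integration by parts from Theorem~\ref{propriedades delta integral}(7) together with the commutation rule from Lemma~\ref{lemmaderivadacomposta}. For the base case $i=1$, the claimed identity reads
$$
\int_a^b f(t) g^{\sigma^{r-1}\Delta}(t)\,\Delta t
= \Bigl[f(t) g^{\sigma^{r-1}}(t)\Bigr]_a^b
- \int_a^b f^\Delta(t)\, g^{\sigma^r}(t)\,\Delta t,
$$
which is exactly part~(7b) of Theorem~\ref{propriedades delta integral} applied to $f$ and the function $g^{\sigma^{r-1}}$ (noting $\bigl(g^{\sigma^{r-1}}\bigr)^\sigma = g^{\sigma^r}$ and $\bigl(g^{\sigma^{r-1}}\bigr)^\Delta = g^{\sigma^{r-1}\Delta}$), so the empty sum convention handles the middle term. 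The smoothness hypotheses on $f$ and $g$ guarantee all the delta derivatives and compositions that appear are rd-continuous, so every integral below is well defined.

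For the inductive step, assume the formula holds for some $i$ with $1 \le i \le r-1$; I will derive it for $i+1$. Starting from $\int_a^b f(t) g^{\sigma^{r-(i+1)}\Delta^{i+1}}(t)\,\Delta t$, the key observation is that $g^{\sigma^{r-(i+1)}\Delta^{i+1}} = \bigl(g^{\sigma^{r-(i+1)}\Delta^{i}}\bigr)^\Delta$, and more usefully, by repeatedly applying Lemma~\ref{lemmaderivadacomposta} (which gives $h^{\sigma\Delta} = a_1 h^{\Delta\sigma}$, equivalently $h^{\sigma\Delta} = a_1 h^{\Delta\sigma}$ for twice-differentiable $h$), one can shift a $\sigma$ past a $\Delta$ at the cost of a factor $a_1$. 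Concretely, $g^{\sigma^{r-(i+1)}\Delta^{i+1}}(t) = \frac{1}{a_1}\, \bigl(g^{\sigma^{r-i}\Delta^{i}}\bigr)^\Delta(t)$ — applying the commutation once to move one $\sigma$ through the outermost $\Delta$. Then part~(7b) of Theorem~\ref{propriedades delta integral} with the pair $f$ and $g^{\sigma^{r-i}\Delta^{i}}$ yields
$$
\int_a^b f(t) g^{\sigma^{r-(i+1)}\Delta^{i+1}}(t)\,\Delta t
= \frac{1}{a_1}\left(\Bigl[f(t) g^{\sigma^{r-i}\Delta^{i}}(t)\Bigr]_a^b
- \int_a^b f^\Delta(t)\, g^{\sigma^{r-i+1}\Delta^{i}}(t)\,\Delta t\right).
$$
Now apply the induction hypothesis to the remaining integral $\int_a^b f^\Delta(t)\, g^{\sigma^{r-i}\Delta^{i}}(t)\,\Delta t$ — but note the exponent bookkeeping: the remaining integral has $g$ composed with $\sigma^{r-i+1}$ and differentiated $i$ times, so I must rewrite it to match the template of the lemma with the new integrand $f^\Delta$ in place of $f$. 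After relabeling and pulling out the $\frac{1}{a_1}$ factors, the boundary terms reorganize into the claimed sum $\sum_{k=1}^{i}(-1)^k f^{\Delta^k}(t) g^{\sigma^{r-(i+1)+k}\Delta^{i-k}}(t)\prod_{j=1}^{k}(1/a_1)^{(i+1)-j}$, and the final integral picks up the correct power $(1/a_1)^{(i+1)i/2}$ since $\frac{i(i-1)}{2} + i = \frac{i(i+1)}{2}$ and each commutation-through step contributes exactly the $a_1$ factors needed to build the product $\prod_{j=1}^k (1/a_1)^{(i+1)-j}$ in the $k$th boundary term.

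The main obstacle I anticipate is purely the index-juggling: tracking how many $\sigma$'s and $\Delta$'s sit on $g$ in each term as the recursion unfolds, and verifying that the $a_1$-powers accumulated by successive applications of Lemma~\ref{lemmaderivadacomposta} assemble into exactly $\prod_{j=1}^{k}(1/a_1)^{i-j}$ in the boundary sum and $(1/a_1)^{i(i-1)/2}$ in the integral. A clean way to manage this is to first prove, as a sublemma, the commutation identity $g^{\sigma^{m}\Delta^{\ell}}(t) = (1/a_1)^{m}\, g^{\Delta^{\ell}\sigma^{m}}(t)$ for $\ell \ge m$ (by iterating Lemma~\ref{lemmaderivadacomposta}), so that in the induction one can freely normalize the position of the $\sigma$'s before applying the one-step integration by parts. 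With that sublemma in hand, the exponent arithmetic reduces to the elementary identity $\binom{i}{2} + i = \binom{i+1}{2}$ and a shift of summation index, and the induction closes. No issues of convergence arise here since $[a,b]$ is compact (in the time-scale topology) and all functions involved are rd-continuous.
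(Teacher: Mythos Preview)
Your strategy of inducting on $i$ (rather than on $r$, as the paper does) is viable, but the commutation step you write down is wrong, and the error propagates to incorrect boundary terms and powers of $a_1$. You claim $g^{\sigma^{r-(i+1)}\Delta^{i+1}}=\frac{1}{a_1}\bigl(g^{\sigma^{r-i}\Delta^i}\bigr)^\Delta$. Setting $h:=g^{\sigma^{r-i-1}}$, the left side is $h^{\Delta^{i+1}}$ while the right side is $\frac{1}{a_1}h^{\sigma\Delta^{i+1}}$; iterating Lemma~\ref{lemmaderivadacomposta} gives $h^{\sigma\Delta^{i+1}}=a_1^{i+1}h^{\Delta^{i+1}\sigma}$, so the two sides differ both by a factor $a_1^{i}$ and by a $\sigma$-shift of the argument. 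In particular, your first boundary term $\frac{1}{a_1}\bigl[f\,g^{\sigma^{r-i}\Delta^i}\bigr]_a^b$ cannot match the required leading term $\bigl[f\,g^{\sigma^{r-(i+1)}\Delta^i}\bigr]_a^b$ of the formula at level $i+1$. Your proposed sublemma $g^{\sigma^m\Delta^\ell}=(1/a_1)^m g^{\Delta^\ell\sigma^m}$ is likewise incorrect: each swap of an adjacent $\sigma$ and $\Delta$ contributes a factor $a_1$, so the true identity is $g^{\sigma^m\Delta^\ell}=a_1^{\,m\ell}\,g^{\Delta^\ell\sigma^m}$.

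The induction on $i$ can be repaired by integrating by parts \emph{before} commuting. Write $g^{\sigma^{r-(i+1)}\Delta^{i+1}}=\bigl(g^{\sigma^{r-i-1}\Delta^i}\bigr)^\Delta$ and apply Theorem~\ref{propriedades delta integral}(7b) directly to obtain $\bigl[f\,g^{\sigma^{r-i-1}\Delta^i}\bigr]_a^b-\int_a^b f^\Delta\,\bigl(g^{\sigma^{r-i-1}\Delta^i}\bigr)^\sigma\,\Delta t$; then use the (correct) commutation $\bigl(g^{\sigma^{r-i-1}\Delta^i}\bigr)^\sigma=(1/a_1)^{i}\,g^{\sigma^{r-i}\Delta^i}$ and invoke the induction hypothesis on $\int_a^b f^\Delta\,g^{\sigma^{r-i}\Delta^i}\,\Delta t$. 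The integral term then carries the exponent $i+\tfrac{i(i-1)}{2}=\tfrac{i(i+1)}{2}$ as required, and the shift $k\mapsto k+1$ in the boundary sum yields the products $\prod_{j=1}^{k}(1/a_1)^{(i+1)-j}$. For comparison, the paper sidesteps this delicate bookkeeping by inducting on $r$: for $i\le r$ it substitutes $g^\sigma$ for $g$ in the order-$r$ formula, and handles the new case $i=r+1$ by writing $g^{\Delta^{r+1}}=(g^\Delta)^{\Delta^r}$, applying the order-$r$ case, and finishing with one further integration by parts.
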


\begin{proof}
We prove the lemma by mathematical induction.
If $r=1$, the result is obviously true: it coincides
with the usual integration by parts formula on time scales.
Assuming that the result holds for an arbitrary $r$, we will prove it for $r+1$.
Fix $i=1,\ldots, r$. By the induction hypotheses,
\begin{equation*}
\begin{split}
\int_a^b f(t) & g^{\sigma^{r+1-i}\Delta^{i}}(t) \Delta t
= \int_a^b f(t)  (g^\sigma)^{\sigma^{r-i}\Delta^{i}}(t) \Delta t\\
&= \left[f(t) (g^\sigma)^{\sigma^{r-i}\Delta^{i-1}}(t)
+ \sum_{k=1}^{i-1}(-1)^k f^{\Delta^k}(t)  (g^\sigma)^{\sigma^{r-i+k}\Delta^{i-1-k}}(t)
\cdot \prod_{j=1}^{k}\left(\frac{1}{a_1}\right)^{i-j}\right]_{a}^{b}\\
& \qquad + (-1)^i  \int_a^b \left(\frac{1}{a_1}\right)^{\frac{i(i-1)}{2}}
f^{\Delta^{i}} (t)  (g^\sigma)^{\sigma^{r}}(t)\Delta t\\
&= \left[f(t) g^{\sigma^{r+1-i}\Delta^{i-1}}(t)
+ \sum_{k=1}^{i-1}(-1)^k f^{\Delta^k}(t)
g^{\sigma^{r+1-i+k}\Delta^{i-1-k}}(t) \cdot \prod_{j=1}^{k}\left(\frac{1}{a_1}\right)^{i-j}\right]_{a}^{b}\\
& \qquad + (-1)^i \displaystyle \int_a^b \left(\frac{1}{a_1}\right)^{\frac{i(i-1)}{2}}
f^{\Delta^{i}} (t) g^{\sigma^{r+1}}(t)\Delta t.
\end{split}
\end{equation*}
It remains to prove that the result is true for $i=r+1$. Note that
\begin{equation*}
\begin{split}
\int_a^b & f(t) g^{\Delta^{r+1}}(t) \Delta t
= \int_a^b f(t)  (g^\Delta)^{\Delta^{r}}(t) \Delta t\\
&= \left[f(t) (g^\Delta)^{\Delta^{r-1}} (t) +
\sum_{k=1}^{r-1}(-1)^k f^{\Delta^k}(t)  (g^\Delta)^{\sigma^{k}\Delta^{r-1-k}}(t)
\cdot \prod_{j=1}^{k}\left(\frac{1}{a_1}\right)^{r-j}\right]_{a}^{b}\\
&\qquad + (-1)^r \int_a^b \left(\frac{1}{a_1}\right)^{\frac{r(r-1)}{2}}
f^{\Delta^{r}}(t) (g^\Delta)^{\sigma^{r}}(t)\Delta t
\ \quad \quad (\mbox {by induction hypotheses})\\
&= \left[f(t) g^{\Delta^{r}} (t) +
\sum_{k=1}^{r-1}(-1)^k f^{\Delta^k}(t)
g^{\sigma^{k}\Delta^{r-k}}(t) \cdot \left( \frac{1}{a_1}\right)^{k}
\prod_{j=1}^{k}\left(\frac{1}{a_1}\right)^{r-j}\right]_{a}^{b}\\
& \qquad + \ (-1)^r \displaystyle \int_a^b \left(\frac{1}{a_1}\right)^{\frac{r(r-1)}{2}}
\left( \frac{1}{a_1}\right)^{r} f^{\Delta^{r}} (t) (g^{\sigma^{r}})^{\Delta}(t)\Delta t
\ \quad \quad (\mbox {by Lemma} \  \ref{lemmaderivadacomposta}).
\end{split}
\end{equation*}
Using the standard integration by parts formula
in the last delta integral, and taking into account that
$\left( \frac{1}{a_1}\right)^{k} \prod_{j=1}^{k}\left(\frac{1}{a_1}\right)^{r-j}
= \prod_{j=1}^{k}\left(\frac{1}{a_1}\right)^{r+1-j}$,
we conclude that
\begin{equation*}
\begin{split}
\int_a^b & f(t) g^{\Delta^{r+1}}(t) \Delta t
= \left[f(t) g^{\Delta^{r}}(t)
+ \sum_{k=1}^{r-1}(-1)^k f^{\Delta^k}(t)  g^{\sigma^{k}\Delta^{r-k}}(t)
\cdot \prod_{j=1}^{k}\left(\frac{1}{a_1}\right)^{r+1-j}\right]_{a}^{b}\\
& \qquad + \left[(-1)^r f^{\Delta^{r}} (t) g^{\sigma^{r}}(t)
\left( \frac{1}{a_1}\right)^{\frac{r(r+1)}{2}} \right]_{a}^{b}
- (-1)^{r} \int_a^b \left(\frac{1}{a_1}\right)^{\frac{r(r+1)}{2}}
f^{\Delta^{r+1}} (t)  g^{\sigma^{r+1}}(t)\Delta t \\
&= \left[f(t) g^{\Delta^{r}}(t)
+ \sum_{k=1}^{r}(-1)^k f^{\Delta^k}(t)  g^{\sigma^{k}\Delta^{r-k}}(t)
\cdot \prod_{j=1}^{k}\left(\frac{1}{a_1}\right)^{r+1-j}\right]_{a}^{b}\\
& \qquad + (-1)^{r+1} \int_a^b \left(\frac{1}{a_1}\right)^{\frac{r(r+1)}{2}}
f^{\Delta^{r+1}}(t) g^{\sigma^{r+1}}(t)\Delta t,
\end{split}
\end{equation*}
proving that the result is true for $i=r+1$.
\end{proof}

Before presenting the higher-order fundamental lemmas of the calculus
of variations on time scales, we need the following three preliminary results.

\begin{lemma}
\label{teorema_tecnico}
Suppose that $a \in \mathbb{T}$ and
$f\in C_{rd}([a,+\infty[, \mathbb{R})$
be such that $f \geq 0$ on $[a,+\infty[$. If
$$\lim_{T\rightarrow+\infty}\inf_{T^\prime \geq
T} \int_{a}^{T^\prime}f(t)\Delta t=0,$$
then $f \equiv 0$ on $[a,+\infty[$.
\end{lemma}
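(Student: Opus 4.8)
The plan is to argue by contradiction. Suppose that $f \not\equiv 0$ on $[a,+\infty[$. Since $f$ is rd-continuous and nonnegative, there exists a point $t_0 \in [a,+\infty[$ with $f(t_0) > 0$. I would distinguish two cases according to whether $t_0$ is right-scattered or right-dense, exactly as is standard for fundamental lemmas on time scales.

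If $t_0$ is right-scattered, then $\sigma(t_0) > t_0$ and, using property~6 of Theorem~\ref{propriedades delta integral} together with $f \geq 0$, one gets
$$
\int_{a}^{\sigma(t_0)} f(t)\Delta t
\;\geq\; \int_{t_0}^{\sigma(t_0)} f(t)\Delta t
\;=\; \mu(t_0)\, f(t_0) \;>\; 0 .
$$
If instead $t_0$ is right-dense, then by rd-continuity $f$ is continuous at $t_0$, so there is a $\delta > 0$ and a point $t_1 \in (t_0, t_0+\delta] \cap \mathbb{T}$ with $t_1 > t_0$ (such a $t_1$ exists precisely because $t_0$ is right-dense) and $f(t) > \tfrac{1}{2} f(t_0) > 0$ for all $t \in [t_0, t_1]$. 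Again by property~6 of Theorem~\ref{propriedades delta integral},
$$
\int_{a}^{t_1} f(t)\Delta t \;\geq\; \int_{t_0}^{t_1} f(t)\Delta t \;>\; 0 .
$$
In either case I obtain a fixed point $c \in [a,+\infty[$ with $c > a$ and $\int_{a}^{c} f(t)\Delta t =: m > 0$.

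The final step is to feed this back into the hypothesis. For any $T^\prime \geq c$ we have, by additivity (property~5) and nonnegativity (property~6), $\int_{a}^{T^\prime} f(t)\Delta t = \int_{a}^{c} f(t)\Delta t + \int_{c}^{T^\prime} f(t)\Delta t \geq m$. Hence for every $T \geq c$,
$$
\inf_{T^\prime \geq T} \int_{a}^{T^\prime} f(t)\Delta t \;\geq\; m \;>\; 0 ,
$$
and therefore $\lim_{T\rightarrow +\infty} \inf_{T^\prime \geq T} \int_{a}^{T^\prime} f(t)\Delta t \geq m > 0$, contradicting the hypothesis that this limit is $0$. This contradiction shows $f \equiv 0$ on $[a,+\infty[$.

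I do not expect any serious obstacle here; the only point requiring a little care is the right-dense case, where one must invoke rd-continuity to produce a whole subinterval on which $f$ is bounded below, rather than just positivity at the single point $t_0$ — and one must make sure the subinterval $[t_0,t_1]$ is nondegenerate, which is guaranteed because $t_0$ is right-dense. Everything else is a direct application of the monotonicity and additivity properties of the delta integral collected in Theorem~\ref{propriedades delta integral}.
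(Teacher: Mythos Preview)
Your proof is correct and follows essentially the same approach as the paper: argue by contradiction, split into the right-scattered and right-dense cases to produce a point $c$ with $\int_a^c f(t)\,\Delta t > 0$, and then use nonnegativity of $f$ to bound $\inf_{T'\geq T}\int_a^{T'} f(t)\,\Delta t$ away from zero for all large $T$. The only cosmetic differences are that the paper fixes a single $b>t_0$ at the outset and computes $\inf_{T'\geq T}\int_a^{T'} f = \int_a^{T} f$ explicitly (using monotonicity in $T$), whereas you take $c=\sigma(t_0)$ or $c=t_1$ directly and bound the infimum below by $m$; neither change affects the argument.
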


\begin{proof}
Suppose, by contradiction, that there exists $t_0\in [a,+\infty[$ such
that $f(t_0)>0$. Fix $b \in \mathbb{T}$ such that $a_0\leq t_0 <b$. We will prove that
$\int_a^b f(t)\Delta t>0$.
If $t_0$ is right-scattered, then
\begin{equation*}
\begin{split}
\int_{a}^{b}f(t) \Delta t
&= \int_{a}^{t_0}f(t)\Delta t
+ \int_{t_0}^{\sigma(t_0)}f(t)\Delta t
+ \int_{\sigma(t_0)}^{b}f(t)\Delta t\\
&\geq \int_{t_0}^{\sigma(t_0)}f(t)\Delta t
=f(t_0)\left(\sigma(t_0)-t_0\right)>0.
\end{split}
\end{equation*}
If $t_0$ is right-dense, then,
by the continuity of $f$ at $t_0$,
there exists $\delta>0$ such that $f(t)>0$
for all $t\in[t_0,t_0+\delta[$ and, therefore,
\begin{equation*}
\begin{split}
\int_{a}^{b} f(t) \Delta t
&= \int_{a}^{t_0}f(t) \Delta t
+ \int_{t_0}^{t_0+\delta}f(t) \Delta t
+ \int_{t_0+\delta}^{b}f(t) \Delta t\\
&\geq \int_{t_0}^{t_0+\delta}f(t)\Delta t > 0.
\end{split}
\end{equation*}
Then, for any $T>b$,
$$
\inf_{T^\prime \geq T} \int_{a}^{T^\prime}f(t)\Delta t=
\int_{a}^{T}f(t) \Delta t\geq \int_{a}^{b}f(t) \Delta t >0,
$$
and hence
$$
\lim_{T\rightarrow+\infty} \inf_{T^\prime \geq T}
\int_{a}^{T^\prime}f(t)\Delta \geq \int_{a}^{b}f(t) \Delta t >0,
$$
which is a contradiction.
\end{proof}

\begin{lemma}
\label{lemma2}
Let $f \in C_{rd}([a,+\infty[, \mathbb{R})$. If
$$
\lim_{T\rightarrow+\infty}\inf_{T^\prime \geq T}
\int_{a}^{T^\prime}f(t)\eta^{\Delta}(t)\Delta t=0
$$
for all $\eta \in C_{rd}^1([a,+\infty[, \mathbb{R})$
such that $\eta(a)=0$, then
$f(t)= c$  for all $t\in [a,+\infty[$, where $c \in \mathbb{R}$.
\end{lemma}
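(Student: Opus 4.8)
The plan is to run a du Bois--Reymond type argument, localized to bounded subintervals, which exploits the $\inf$--$\liminf$ structure of the hypothesis by testing it only against variations that are \emph{eventually constant}. Fix $b\in\mathbb{T}$ with $b>a$, chosen large enough that $\mathbb{T}$ contains two points strictly between $a$ and $b$ (this is harmless in view of the patching below), and feed the hypothesis only admissible $\eta$ --- that is, $\eta\in C^1_{rd}([a,+\infty[,\mathbb{R})$ with $\eta(a)=0$ --- that vanish identically on $[b,+\infty[$. For any such $\eta$ one has $\int_a^{T^\prime}f(t)\eta^\Delta(t)\,\Delta t=\int_a^{b}f(t)\eta^\Delta(t)\,\Delta t$ for every $T^\prime\ge b$, hence $\inf_{T^\prime\ge T}\int_a^{T^\prime}f\eta^\Delta\,\Delta t=\int_a^{b}f\eta^\Delta\,\Delta t$ for all $T\ge b$, and the hypothesis degenerates to the finite-horizon relation $\int_a^{b}f(t)\eta^\Delta(t)\,\Delta t=0$.

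Next I would choose the test variation. Pick $b_\ast,b_{\ast\ast}\in\mathbb{T}$ with $a<b_\ast<b_{\ast\ast}<b$ and a continuous cut-off $\phi:[a,+\infty[\to[0,1]$ with $\phi\equiv1$ on $[a,b_\ast]$ and $\phi\equiv0$ on $[b_{\ast\ast},+\infty[$. Set $c:=\frac{\int_a^{b}f(\tau)\phi(\tau)\,\Delta\tau}{\int_a^{b}\phi(\tau)\,\Delta\tau}$, so that $\int_a^{b}(f(\tau)-c)\phi(\tau)\,\Delta\tau=0$, and put $\eta(t):=\int_a^{t}(f(\tau)-c)\phi(\tau)\,\Delta\tau$ for $t\in[a,+\infty[$. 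Since $\phi\equiv0$ on $[b_{\ast\ast},+\infty[$ and $\int_a^{b}(f-c)\phi\,\Delta\tau=0$, the function $\eta$ vanishes on $[b_{\ast\ast},+\infty[\supseteq[b,+\infty[$; moreover $\eta(a)=0$ and, $(f-c)\phi$ being rd-continuous, $\eta\in C^1_{rd}([a,+\infty[,\mathbb{R})$ with $\eta^\Delta=(f-c)\phi$. Thus $\eta$ is admissible and vanishes on $[b,+\infty[$, so the relation from the first paragraph applies; using the pointwise identity $f\cdot(f-c)\phi=(f-c)^2\phi+c\,(f-c)\phi$ and $\int_a^{b}(f-c)\phi\,\Delta\tau=0$ it yields $\int_a^{b}(f(t)-c)^2\phi(t)\,\Delta t=0$.

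Finally, $(f-c)^2\phi\ge0$ and $0\le\int_a^{b_\ast}(f(t)-c)^2\,\Delta t=\int_a^{b_\ast}(f-c)^2\phi\,\Delta t\le\int_a^{b}(f-c)^2\phi\,\Delta t=0$, so positivity of the delta integral (exactly as in the proof of Lemma~\ref{teorema_tecnico}) forces $f\equiv c$ on $[a,b_\ast[$. Letting $b_\ast\uparrow b$ --- the constants $c$ produced for nested choices of $b_\ast$ agree on the overlapping intervals, hence are all equal --- gives $f\equiv c$ on $[a,b[$; and since $b>a$ may be taken arbitrarily large while $\sup\mathbb{T}=+\infty$, $f$ is constant on all of $[a,+\infty[$, as claimed. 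The one genuinely delicate step is the construction of the test variation: a \emph{sharp} cut-off $\eta(t)=\int_a^{\min\{t,b\}}(f-c)\,\Delta\tau$ is delta-differentiable at the gluing point $t=b$ only when $b$ is right-scattered --- at a right-dense $b$ its left difference quotient tends to $f(b)-c$, which need not vanish, whereas its right difference quotient is $0$ --- so to keep $\eta\in C^1_{rd}$ one must instead force $\eta$ to be constant on a whole left-neighbourhood of $b$, which is precisely the role of the cut-off $\phi$ and the reason for the limiting step $b_\ast\uparrow b$. Everything else is routine manipulation of delta integrals via Theorem~\ref{propriedades delta integral}.
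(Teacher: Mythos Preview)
Your proof is correct and takes a genuinely different route from the paper's. The paper argues globally: it fixes $T'\ge T>a$, chooses $c=c(T')$ so that $\int_a^{T'}(f(\tau)-c)\,\Delta\tau=0$, sets $\eta(t)=\int_a^t(f(\tau)-c)\,\Delta\tau$ with no cut-off, reads off $\int_a^{T'}f\,\eta^\Delta\,\Delta t=\int_a^{T'}(f-c)^2\,\Delta t$, and then appeals to the hypothesis together with Lemma~\ref{teorema_tecnico} to conclude $f\equiv c$. You instead first \emph{localize}: testing only against compactly supported variations collapses the $\lim\inf$ hypothesis to the finite-horizon identity $\int_a^b f\,\eta^\Delta\,\Delta t=0$, after which you run a standard du Bois--Reymond argument on $[a,b]$ with a continuous cut-off $\phi$ (which, as you correctly observe, is what keeps $\eta\in C_{rd}^1$ at a right-dense endpoint) and patch as $b_\ast,b\to+\infty$. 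The paper's version is shorter and feeds directly into Lemma~\ref{teorema_tecnico}, but its test function depends on $T'$ through $c$, so the passage from the fixed-$\eta$ hypothesis to $\lim_{T\to+\infty}\inf_{T'\ge T}\int_a^{T'}(f-c)^2\,\Delta t=0$ is left somewhat implicit; your localization sidesteps that issue completely---once on a bounded interval you work with a single fixed $\eta$---at the price of the cut-off machinery and the exhaustion step. One cosmetic remark: the intermediate ``$b_\ast\uparrow b$'' is not really needed and can be awkward on a left-scattered time scale; it suffices to send $b\to+\infty$ with, say, $b_\ast$ chosen so that $b_\ast\to+\infty$ as well, since the resulting intervals $[a,b_\ast[$ already exhaust $[a,+\infty[$.
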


\begin{proof}
Fix $T, T^\prime \in \mathbb{T}$ such that $T^\prime \geq T >a$.
Let $c$ be a constant defined by the condition
$$
\int_{a}^{T^\prime}\left(f(\tau)-c\right)\Delta\tau=0,
$$
and let
$$
\eta(t)=\int_{a}^{t}\left(f(\tau)-c\right)\Delta\tau.
$$
Clearly, $\eta \in C_{rd}^1([a,+\infty[, \mathbb{R})$,
$\eta^{\Delta}(t)=f(t)-c$,
$$
\eta(a)=\int_{a}^{a}\left(f(\tau)-c\right)\Delta\tau=0,
\quad \mbox{ and } \quad
\eta(T^\prime)=\int_{a}^{T^\prime}\left(f(\tau)-c\right)\Delta\tau=0.
$$
Observe that
$$
\int_{a}^{T^\prime}\left(f(t)-c\right)\eta^{\Delta}(t)\Delta t=
\int_{a}^{T^\prime}\left(f(t)-c\right)^2\Delta t
$$
and
$$
\int_{a}^{T^\prime}\left(f(t)-c\right)\eta^{\Delta}(t)\Delta t
= \int_{a}^{T^\prime}f(t)\eta^{\Delta}(t)\Delta t- c
\int_{a}^{T^\prime}\eta^{\Delta}(t)\Delta t
=\int_{a}^{T^\prime}f(t)\eta^{\Delta}(t)\Delta t.
$$
Hence,
$$
\lim_{T\rightarrow+\infty}\inf_{T^\prime \geq T}
\int_{a}^{T^\prime}f(t)\eta^{\Delta}(t)\Delta t
= \lim_{T\rightarrow+\infty}\inf_{T^\prime \geq T}
\int_{a}^{T^\prime}\left(f(t)-c\right)^2\Delta t=0,
$$
which shows, by Lemma~\ref{teorema_tecnico}, that
$f(t)-c=0$ for all $t \in [a,+\infty[$.
\end{proof}

\begin{lemma}
\label{lemma4}
Let $f, g \in C_{rd}([a,+\infty[, \mathbb{R})$. If
$$
\lim_{T\rightarrow+\infty}\inf_{T^\prime \geq
T} \int_{a}^{T^\prime}\left(f(t)\eta^\sigma(t) + g(t)
\eta^{\Delta}(t)\right)\Delta t=0
$$
for all $\eta \in C_{rd}^1([a,+\infty[, \mathbb{R})$
such that $\eta(a)=0$, then $g$ is delta differentiable and
$$
g^{\Delta}(t)=f(t) \quad \forall t\in [a,+\infty[.
$$
\end{lemma}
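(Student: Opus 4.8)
The plan is to peel off the $\eta^\sigma$ term by a single integration by parts, which reduces the statement to showing that $g$ minus a delta antiderivative of $f$ is constant; this last conclusion is of du~Bois--Reymond type, in the spirit of Lemma~\ref{lemma2}. First, since $f\in C_{rd}([a,+\infty[,\mathbb{R})$, let $F(t):=\int_a^t f(\tau)\,\Delta\tau$; then $F$ is delta differentiable with $F^\Delta=f$, $F(a)=0$, and $F\in C^1_{rd}([a,+\infty[,\mathbb{R})$. Fix $\eta\in C^1_{rd}([a,+\infty[,\mathbb{R})$ with $\eta(a)=0$. Applying item 7(a) of Theorem~\ref{propriedades delta integral} to the pair $(\eta,F)$ and using $\eta(a)=0$ gives $\int_a^{T'}f(t)\eta^\sigma(t)\,\Delta t=\eta(T')F(T')-\int_a^{T'}F(t)\eta^\Delta(t)\,\Delta t$, hence
\[
\int_a^{T'}\bigl(f(t)\eta^\sigma(t)+g(t)\eta^\Delta(t)\bigr)\,\Delta t=\eta(T')F(T')+\int_a^{T'}\bigl(g(t)-F(t)\bigr)\eta^\Delta(t)\,\Delta t .
\]

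Second, I would dispose of the boundary term $\eta(T')F(T')$ by testing only against variations that vanish eventually. Given $b\in\mathbb{T}$ with $b>a$, take any $\eta\in C^1_{rd}([a,+\infty[,\mathbb{R})$ with $\eta(a)=0$ and $\eta\equiv 0$ on $[b,+\infty[$. For such $\eta$ the integrand on the left above vanishes for $t\ge b$ and $\eta(T')F(T')=0$ for $T'\ge b$, so for every $T'\ge b$ the quantity $\int_a^{T'}(f\eta^\sigma+g\eta^\Delta)\,\Delta t$ equals the $T'$-independent number $\int_a^b (g-F)\eta^\Delta\,\Delta t$. Therefore the hypothesis of the lemma forces $\int_a^b \bigl(g(t)-F(t)\bigr)\eta^\Delta(t)\,\Delta t=0$ for every such $\eta$.

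Third, I would finish with the finite-horizon du~Bois--Reymond argument on $[a,b]$, exactly as in the proof of Lemma~\ref{lemma2}: put $c:=\frac{1}{b-a}\int_a^b (g-F)\,\Delta t$, so that $\int_a^b (g-F-c)\,\Delta t=0$, and use the admissible test function with delta derivative $g-F-c$ on $[a,b]$, extended by $0$ beyond $b$ (with a short taper near $b$ to keep it in $C^1_{rd}$ when $b$ is right-dense). This yields $\int_a^b (g-F)(g-F-c)\,\Delta t=0$ and hence $\int_a^b (g-F-c)^2\,\Delta t=0$; by Lemma~\ref{teorema_tecnico} (or item 6 of Theorem~\ref{propriedades delta integral}) we get $g-F\equiv c$ on $[a,b]$. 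Two such constants coincide on overlapping intervals, and since $\sup\mathbb{T}=+\infty$ this constant is independent of $b$; hence $g=F+c$ on $[a,+\infty[$, so $g$ is delta differentiable and $g^\Delta=F^\Delta=f$ on $[a,+\infty[$.

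I expect the only real obstacle to be the boundary term $\eta(T')F(T')$: it is what blocks a one-line reduction to Lemma~\ref{lemma2} and what makes the restriction to eventually-vanishing variations (plus the minor $C^1_{rd}$ bookkeeping at a right-dense endpoint) necessary. Everything else --- the integration by parts, the choice of $c$, and the passage from ``constant on each $[a,b]$'' to ``constant on $[a,+\infty[$'' --- is routine.
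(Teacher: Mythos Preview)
Your proof is correct and follows the same plan as the paper: set $F(t)=\int_a^t f\,\Delta\tau$ (the paper writes $A$), integrate by parts to obtain
\[
\int_a^{T'}\bigl(f\eta^\sigma+g\eta^\Delta\bigr)\,\Delta t=F(T')\eta(T')+\int_a^{T'}(g-F)\eta^\Delta\,\Delta t,
\]
kill the boundary term by restricting $\eta$, and conclude via a du~Bois--Reymond argument that $g-F$ is constant. The only difference is in how the last two steps are packaged. The paper simply restricts to variations with $\eta(T')=0$ and invokes Lemma~\ref{lemma2} directly on $[a,+\infty[$ --- precisely the one-line reduction you said was blocked --- relying on the fact that the test function constructed inside the proof of Lemma~\ref{lemma2} already satisfies $\eta(T')=0$ by design, so the restriction loses nothing. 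Your route through eventually-vanishing $\eta$, a finite-horizon du~Bois--Reymond on each $[a,b]$, and patching of the constants is a legitimate and more self-contained variant, at the price of the endpoint taper and the overlap bookkeeping that the paper's version avoids.
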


\begin{proof}
Fix $T, T^\prime \in \mathbb{T}$ such that
$T^\prime\geq T>a$ and
define $A(t)=\int_{a}^{t} f(\tau)\Delta \tau$. Then
$A^{\Delta} (t)=f(t)$ for all $t \in [a,+\infty[$  and
$$
\int_{a}^{T^\prime} A(t)\eta^{\Delta}(t) \Delta t
= \displaystyle \left[ A(t)\eta(t) \right]_{a}^{T^\prime}
- \int_{a}^{T^\prime}A^{\Delta}(t) \eta^\sigma (t) \Delta t
=A(T^\prime)\eta(T^\prime) - \int_{a}^{T^\prime}
f(t) \eta^\sigma(t) \Delta t.
$$
Restricting $\eta$ to those such that $\eta(T^\prime)=0$, we obtain
$$
\lim_{T\rightarrow+\infty}\inf_{T^\prime \geq
T} \int_{a}^{T^\prime} \left(f(t)\eta^\sigma(t) + g(t)
\eta^{\Delta}(t)\right)\Delta t =
\lim_{T\rightarrow+\infty}\inf_{T^\prime \geq
T} \int_{a}^{T^\prime} \left(-A(t) + g(t) \right)\eta^{\Delta}(t)\Delta t = 0.
$$
By Lemma~\ref{lemma2} we may conclude that there exists
$c \in \mathbb{R}$ such that $-A(t) + g(t)=c$
for all $t \in  [a,+\infty[$. Therefore,
$A^{\Delta}(t)=g^{\Delta}(t)$ for all $t \in [a,+\infty[$,
proving the desired result:
$g^{\Delta}(t)=f(t)$ for all $ t\in [a,+\infty[$.
\end{proof}

We are now in conditions to prove the following three
fundamental lemmas of the calculus of variations for higher-order
infinite horizon variational problems on time scales.

\begin{lemma}[Higher-order fundamental lemma of the calculus of variations I]
\label{Fund.Lemma.1}
Let $\mathbb{T}$ be a time scale satisfying condition $(H)$
and such that $\sup \mathbb{T}=+\infty$.
Suppose that $f_0 \in C_{rd}([a,+\infty[, \mathbb{R})$,
$f_1\in C^{1}_{rd}([a,+\infty[, \mathbb{R})$,
$\ldots$, $f_r \in C^{r}_{rd}([a,+\infty[, \mathbb{R})$. If
$$
\lim_{T\rightarrow+\infty}\inf_{T^\prime \geq
T} \int_{a}^{T^\prime}
\left(\sum_{i=0}^{r}f_i(t) \eta^{\sigma^{r-i}\Delta^{i}}(t) \right)
\Delta t=0
$$
for all $\eta \in C_{rd}^{2r}([a, +\infty[, \mathbb{R})$ such that
$\eta(a)=0$, $\eta^{\Delta}(a)=0$, \ldots, $\eta^{\Delta^{r-1}}(a)=0$, then
\begin{equation*}
\sum_{i=0}^{r} (-1)^i
\left(\frac{1}{a_1}\right)^{\frac{i(i-1)}{2}}f_i^{\Delta^i}(t) =0
\quad \forall t \in [a,+\infty[.
\end{equation*}
\end{lemma}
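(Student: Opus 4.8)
The plan is to reduce the statement to the first-order fundamental lemma already at our disposal (Lemma~\ref{lemma4}): using the higher-order integration by parts formula I would strip from $\eta$ all delta derivatives but one, so that the hypothesis takes the form covered by Lemma~\ref{lemma4}. (One keeps exactly one $\Delta$ on $\eta$, rather than reducing all the way to a multiple of $\eta^{\sigma^{r}}$, precisely because Lemma~\ref{lemma4} --- unlike Lemma~\ref{teorema_tecnico} --- does not require any sign condition.) Note that for $r=1$ the asserted identity is nothing but $f_1^{\Delta}=f_0$, which is exactly the conclusion of Lemma~\ref{lemma4}.

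\emph{Reduction of the order.} For each $i\in\{2,\dots,r\}$ I would apply Lemma~\ref{integration-parts-higher-order} with $b=T'$ (for its $i$-th instance only $f_i\in C^{i}_{rd}$ is needed) to obtain
\[
\int_a^{T'} f_i(t)\,\eta^{\sigma^{r-i}\Delta^{i}}(t)\,\Delta t
=\bigl[\mathcal B_i(t)\bigr]_a^{T'}
+(-1)^{i}\Bigl(\tfrac{1}{a_1}\Bigr)^{i(i-1)/2}\int_a^{T'} f_i^{\Delta^{i}}(t)\,\eta^{\sigma^{r}}(t)\,\Delta t ,
\]
where each summand of $\mathcal B_i$ is a constant times $f_i^{\Delta^{k}}(t)\,\eta^{\sigma^{p}\Delta^{q}}(t)$ with $0\le k\le i-1$ and $p+q=r-1$; the commutation of $\sigma$ past the delta derivatives built into this identity is exactly what forces condition $(H)$ upon us (through Lemma~\ref{lemmaderivadacomposta}). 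Leaving the terms $i=0,1$ untouched and summing over $i$ gives
\[
\int_a^{T'}\!\Bigl(\textstyle\sum_{i=0}^{r} f_i\,\eta^{\sigma^{r-i}\Delta^{i}}\Bigr)\Delta t
=\bigl[\mathcal B(t)\bigr]_a^{T'}+\int_a^{T'}\!\Bigl(P(t)\,\eta^{\sigma^{r}}(t)+f_1(t)\,\eta^{\sigma^{r-1}\Delta}(t)\Bigr)\Delta t ,
\]
with $\mathcal B:=\sum_{i=2}^{r}\mathcal B_i$ and $P:=f_0+\sum_{i=2}^{r}(-1)^{i}\bigl(\tfrac1{a_1}\bigr)^{i(i-1)/2}f_i^{\Delta^{i}}\in C_{rd}$; a short expansion shows that $P-f_1^{\Delta}$ is precisely the left-hand side of the identity we must prove, so it suffices to show $f_1^{\Delta}=P$ on $[a,+\infty[$.

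\emph{Disposing of the boundary terms.} Each summand of $\mathcal B(a)$ is a constant multiple of $\eta^{\sigma^{p}\Delta^{q}}(a)=a_1^{pq}\,\eta^{\Delta^{q}}(\sigma^{p}(a))$ with $p+q=r-1$. When $a$ is right-dense this is $0$ since $\sigma^{p}(a)=a$ and $q\le r-1$; when $a$ is right-scattered, starting from $\eta^{\Delta^{j}}(a)=0$ ($0\le j\le r-1$) and iterating $\eta^{\Delta^{j}}(s)=\bigl(\eta^{\Delta^{j-1}}(\sigma(s))-\eta^{\Delta^{j-1}}(s)\bigr)/\mu(s)$ --- equivalently, applying Lemma~\ref{lemma_funcoes_admissiveis_1} successively to $\eta,\eta^{\sigma},\eta^{\sigma^{2}},\dots$ --- one checks $\eta^{\Delta^{q}}(\sigma^{p}(a))=0$ whenever $p+q\le r-1$, so $\mathcal B(a)=0$. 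For the contribution at $T'$, as in the proofs of Lemmas~\ref{lemma2} and~\ref{lemma4} I would restrict the variations further so that the finitely many $\eta^{\sigma^{p}\Delta^{q}}$ entering $\mathcal B$ vanish at the truncation point, whence $[\mathcal B(t)]_a^{T'}=0$. Setting $\zeta:=\eta^{\sigma^{r-1}}$ --- so that $\eta^{\sigma^{r}}=\zeta^{\sigma}$, $\eta^{\sigma^{r-1}\Delta}=\zeta^{\Delta}$ and $\zeta(a)=0$ (the case $q=0$ above) --- the hypothesis becomes
\[
\lim_{T\to+\infty}\inf_{T'\ge T}\int_a^{T'}\bigl(P(t)\,\zeta^{\sigma}(t)+f_1(t)\,\zeta^{\Delta}(t)\bigr)\Delta t=0
\]
for $\zeta$ running over a class of admissible first-order variations, and Lemma~\ref{lemma4} then yields $f_1^{\Delta}=P$ on $[a,+\infty[$, as desired.

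\emph{Where the difficulty sits.} The genuinely delicate part is this last step: one must verify that, after the extra vanishing imposed at the truncation point, the functions $\zeta=\eta^{\sigma^{r-1}}$ still form a family rich enough for the argument of Lemma~\ref{lemma4} to go through --- equivalently, that the first-order test functions used there can be pulled back through $\sigma^{r-1}$ to an admissible $\eta\in C^{2r}_{rd}$ with $\eta^{\Delta^{j}}(a)=0$ for $j\le r-1$, of adequate regularity. This is again where condition $(H)$ is essential (so that $\sigma$ is affine, invertible, and satisfies $\eta^{\sigma\Delta}=a_1\eta^{\Delta\sigma}$) together with Lemma~\ref{lemma_funcoes_admissiveis_1}; on the discrete model time scales ($\mathbb{Z}$, $h\mathbb{Z}$, $q^{\mathbb{N}_0}$) the pull-back is immediate, while for $\mathbb{T}=\mathbb{R}$ it reduces to a standard smoothing argument.
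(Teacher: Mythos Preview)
Your approach is sound and reaches the same conclusion, but it differs from the paper's route. The paper argues by induction on $r$: the base case $r=1$ is exactly Lemma~\ref{lemma4}, and the step $r\to r+1$ integrates by parts \emph{once} on the top term $f_{r+1}\eta^{\Delta^{r+1}}$, restricts to $\eta^{\Delta^{r}}(T')=0$, uses Lemma~\ref{lemmaderivadacomposta} to write $(\eta^{\Delta^{r}})^{\sigma}=(1/a_1)^{r}\eta^{\sigma\Delta^{r}}$, and thereby recasts the integrand as an order-$r$ expression in $\eta^{\sigma}$ with $f_r$ replaced by $f_r-(1/a_1)^{r}f_{r+1}^{\Delta}$; Lemma~\ref{lemma_funcoes_admissiveis_1} supplies the initial conditions for $\eta^{\sigma}$ and the induction hypothesis finishes. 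You instead invoke Lemma~\ref{integration-parts-higher-order} once for each $i\ge 2$ to collapse everything directly to a first-order expression in $\zeta=\eta^{\sigma^{r-1}}$ and then apply Lemma~\ref{lemma4}. The inductive proof is self-contained (only one-step IBP, one boundary restriction, one $\sigma/\Delta$ commutation per stage) and keeps the bookkeeping minimal; your direct reduction is conceptually cleaner and genuinely uses the higher-order IBP formula the paper has just proved, at the cost of imposing all boundary restrictions $\eta^{\sigma^{p}\Delta^{q}}(T')=0$, $p+q=r-1$, simultaneously. The ``richness'' issue you single out---whether the test functions actually needed in Lemma~\ref{lemma4} arise as $\eta^{\sigma^{r-1}}$ for admissible $\eta$---is precisely the point the paper's proof also glosses over when it applies the induction hypothesis to functions of the form $\eta^{\sigma}$; so on that score your proposal is at the same level of rigor as the paper, and you have the merit of naming the difficulty explicitly.
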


\begin{proof}
We prove the lemma by mathematical induction.
If $r=1$, the result is true by Lemma~\ref{lemma4}.
Assume now that the result is true for some $r$.
We will prove that the result is also true for $r+1$.
Suppose that
$$
\lim_{T\rightarrow+\infty}\inf_{T^\prime \geq
T} \int_{a}^{T^\prime}
\left(\sum_{i=0}^{r+1}f_i(t) \eta^{\sigma^{r+1-i}\Delta^{i}}(t)
\right) \Delta t=0
$$
for all $\eta \in C_{rd}^{2(r+1)}\left([a, +\infty[,\mathbb{R}\right)$
such that $\eta(a)=0$, $\eta^{\Delta}(a)=0$, \ldots, $\eta^{\Delta^{r}}(a)=0$.
We want to prove that
$$
\sum_{i=0}^{r+1} (-1)^i
\left(\frac{1}{a_1}\right)^{\frac{i(i-1)}{2}}f_i^{\Delta^i}(t) =0
\quad \forall t \in [a,+\infty[.
$$
Note that
$$
\int_{a}^{T^\prime}
\left(\sum_{i=0}^{r+1}f_i(t) \eta^{\sigma^{r+1-i}\Delta^{i}}(t)
\right) \Delta t = \int_{a}^{T^\prime}
\left(\sum_{i=0}^{r}f_i(t) \eta^{\sigma^{r+1-i}\Delta^{i}}(t)
\right) \Delta t \ +  \int_{a}^{T^\prime}  f_{r+1}(t)
\left(\eta^{\Delta^r}\right)^{\Delta}(t) \Delta t.
$$
Using the integration by parts formula in the last integral, we obtain that
$$
\int_{a}^{T^\prime}  f_{r+1}(t) \left (\eta^{\Delta^r}
\right)^{\Delta}(t)
\Delta t = \left[f_{r+1}(t)\eta^{\Delta^r}(t)
\right]^{T^\prime}_{a} - \int_{a}^{T^\prime}
f_{r+1}^{\Delta}(t)\left (\eta^{\Delta^r} \right)^{\sigma}(t) \Delta t.
$$
Since $\eta^{\Delta^{r}}\left(a\right)=0$
and we can restrict ourselves to those $\eta$ such that
$\eta^{\Delta^{r}}\left(T^\prime\right)=0$, then
$$
\int_{a}^{T^\prime}  f_{r+1}(t) \left (\eta^{\Delta^r}
\right)^{\Delta}(t)
\Delta t  = - \int_{a}^{T^\prime} f_{r+1}^{\Delta}(t)\left
(\eta^{\Delta^r} \right)^{\sigma}(t) \Delta t
$$
and, by Lemma~\ref{lemmaderivadacomposta},
$$
\int_{a}^{T^\prime}  f_{r+1}(t) \left (\eta^{\Delta^r}
\right)^{\Delta}(t)
\Delta t  = - \int_{a}^{T^\prime} f_{r+1}^{\Delta}(t)\left(
\frac{1}{a_1}\right)^{r}\eta^{\sigma \Delta^r}(t) \Delta t.
$$
Hence,
\begin{equation*}
\begin{split}
\int_{a}^{T^\prime} & \left(
\sum_{i=0}^{r+1}f_i(t)
\eta^{\sigma^{r+1-i}\Delta^{i}}(t) \right) \Delta t \\
&= \int_{a}^{T^\prime}
\left(\sum_{i=0}^{r}f_i(t) \eta^{\sigma^{r+1-i}\Delta^{i}}(t)\right)
\Delta t -  \int_{a}^{T^\prime} f_{r+1}^{\Delta}(t)\left(
\frac{1}{a_1}\right)^{r}\eta^{\sigma \Delta^r}(t) \Delta t\\
&= \int_{a}^{T^\prime}
\left(\sum_{i=0}^{r-1}f_i(t) \left(\eta^\sigma
\right)^{\sigma^{r-i}\Delta^{i}}(t)
+ \left( f_r(t) - f_{r+1}^{\Delta}(t)\left(
\frac{1}{a_1}\right)^{r}\right)(\eta^\sigma)^{\Delta^r}(t)
\right) \Delta t
\end{split}
\end{equation*}
and, therefore,
\begin{multline*}
\lim_{T\rightarrow+\infty}\inf_{T^\prime \geq T}
\int_{a}^{T^\prime} \left(
\sum_{i=0}^{r+1}f_i(t)
\eta^{\sigma^{r+1-i}\Delta^{i}}(t) \right) \Delta t \\
= \lim_{T\rightarrow+\infty}\inf_{T^\prime
\geq T} \int_{a}^{T^\prime}
\left[\sum_{i=0}^{r-1}f_i(t) \left(\eta^\sigma
\right)^{\sigma^{r-i}\Delta^{i}}(t)
 + \left( f_r(t) - f_{r+1}^{\Delta}(t)\left(
\frac{1}{a_1}\right)^{r}\right) (\eta^\sigma)^{\Delta^r}(t)
 \right] \Delta t=0.
\end{multline*}
By Lemma~\ref{lemma_funcoes_admissiveis_1},
$\eta^\sigma(a) = 0$, $(\eta^\sigma)^{\Delta}(a)= 0$,
\ldots, $(\eta^\sigma)^{\Delta^{r-1}}(a)=0$.
Then, by the induction hypothesis, we conclude that
$$
\sum_{i=0}^{r-1} (-1)^i
\left(\frac{1}{a_1}\right)^{\frac{i(i-1)}{2}}f_i^{\Delta^i}(t)
+ (-1)^r \left(\frac{1}{a_1}\right)^{\frac{r(r-1)}{2}}
\left(f_r(t)
- f^{\Delta}_{r+1}(t)\left(\frac{1}{a_1}\right)^r\right)^{\Delta^r}(t) =0
$$
for all $t \in [a,+\infty[$, which is equivalent to
$$
\sum_{i=0}^{r+1} (-1)^i
\left(\frac{1}{a_1}\right)^{\frac{i(i-1)}{2}}f_i^{\Delta^i}(t) =0
\quad \forall  t \in [a,+\infty[.
$$
\end{proof}

\begin{lemma}[Higher-order fundamental lemma of the calculus of variations II]
\label{first-transversality}
Let $\mathbb{T}$ be a time scale satisfying
condition $(H)$ and such that $\sup \mathbb{T}=+\infty$.
Suppose that  $f_0 \in C_{rd}([a,+\infty[, \mathbb{R})$ and
$f_i \in C^{r}_{rd}([a,+\infty[, \mathbb{R})$ for all $i=1, \ldots, r$. If
$$
\lim_{T\rightarrow+\infty}  \inf_{T^\prime \geq
T} \displaystyle \int_{a}^{T^\prime}  \left(\sum_{i=0}^{r}f_i(t)
\eta^{\sigma^{r-i} \Delta^{i}}(t) \right)\Delta t=0
$$
for all $\eta \in C_{rd}^{2r}([a, +\infty[, \mathbb{R})$ such that
$\eta(a)=0$, $\eta^{\Delta}(a)=0$, \ldots, $\eta^{\Delta^{r-1}}(a)=0$, then
$$
\lim_{T\rightarrow+\infty} \inf_{T^\prime \geq T}
\left\{f_r(T^\prime)\cdot \eta^{\Delta^{r-1}}(T^\prime)\right\} =0.
$$
\end{lemma}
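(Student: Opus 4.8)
The plan is to convert the integral condition into a pure boundary expression by iterated integration by parts, and then to peel off its top‑order part using the freedom available in the choice of admissible variations. Since the hypothesis here coincides with that of Lemma~\ref{Fund.Lemma.1}, the densities $f_i$ already satisfy the Euler--Lagrange equation $\sum_{i=0}^{r}(-1)^i(1/a_1)^{i(i-1)/2}f_i^{\Delta^i}(t)=0$ on $[a,+\infty[$. Fixing $T'>a$ and applying the higher-order integration by parts formula (Lemma~\ref{integration-parts-higher-order}) on $[a,T']$ to each term $\int_a^{T'}f_i\,\eta^{\sigma^{r-i}\Delta^i}\Delta t$, $i=1,\dots,r$, then summing and using Theorem~\ref{propriedades delta integral} together with the Euler--Lagrange equation, the interior integrals cancel and
\begin{equation*}
\int_a^{T'}\Bigl(\sum_{i=0}^{r}f_i(t)\,\eta^{\sigma^{r-i}\Delta^i}(t)\Bigr)\Delta t=B(T')-B(a),
\end{equation*}
where $B(t)=\sum_{i=1}^{r}\bigl[f_i(t)\eta^{\sigma^{r-i}\Delta^{i-1}}(t)+\sum_{k=1}^{i-1}(-1)^k f_i^{\Delta^k}(t)\eta^{\sigma^{r-i+k}\Delta^{i-1-k}}(t)\prod_{j=1}^{k}(1/a_1)^{i-j}\bigr]$.

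Next I would check that $B(a)=0$. Every summand of $B$ carries a factor $\eta^{\sigma^{m}\Delta^{j}}$ with $m+j=r-1$ and $0\le j\le r-1$; iterating Lemma~\ref{lemmaderivadacomposta} gives $\eta^{\sigma^{m}\Delta^{j}}(a)=a_1^{mj}\eta^{\Delta^{j}}(\sigma^{m}(a))$, and by an $m$-fold use of Theorem~\ref{propriedades derivada delta}(4) the quantity $\eta^{\Delta^{j}}(\sigma^{m}(a))$ is a finite combination of $\eta^{\Delta^{j}}(a),\dots,\eta^{\Delta^{j+m}}(a)=\eta^{\Delta^{r-1}}(a)$, all of which vanish by hypothesis (a one-line induction on $m$, in the spirit of Lemma~\ref{lemma_funcoes_admissiveis_1}). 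Hence $\int_a^{T'}\bigl(\sum_{i=0}^{r}f_i\,\eta^{\sigma^{r-i}\Delta^i}\bigr)\Delta t=B(T')$ for all $T'>a$. Moreover the admissible variations form a class stable under $\eta\mapsto-\eta$, and the left-hand side is linear in $\eta$, so applying the hypothesis to $\eta$ and to $-\eta$ forces $\limsup_{T'\to+\infty}B(T')=\liminf_{T'\to+\infty}B(T')=0$, i.e. $\lim_{T'\to+\infty}B(T')=0$ for every admissible $\eta$.

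It remains to isolate the top-order term. Write $B(t)=f_r(t)\eta^{\Delta^{r-1}}(t)+R(t)$, where $R$ collects exactly the summands containing some $\eta^{\sigma^{m}\Delta^{j}}$ with $m\ge1$ (so $j\le r-2$); note $f_r(t)\eta^{\Delta^{r-1}}(t)$ is the only summand involving $\eta^{\Delta^{r-1}}$. Fix an admissible $\eta$ and an arbitrary sequence $t_n\to+\infty$; passing to a subsequence we may assume the ``$\sigma$-clusters'' $C_n:=\{\sigma^{k}(t_n):0\le k\le r-1\}$ are pairwise disjoint (under $(H)$ each $C_n$ is, up to a bounded shift, the geometric block $\{a_1^{k}t_n\}$, so this is achievable). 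I would then construct $\tilde\eta\in C^{2r}_{rd}([a,+\infty[,\mathbb{R})$ with $\tilde\eta^{\Delta^{l}}(a)=0$ for $l=0,\dots,r-1$ and such that, on each $C_n$, $\tilde\eta^{\Delta^{r-1}}(t_n)=\eta^{\Delta^{r-1}}(t_n)$ while $\tilde\eta^{\Delta^{j}}(\sigma^{r-1-j}(t_n))=0$ for $j=0,\dots,r-2$: these are $r$ linear conditions on the (at most $r$) values of $\tilde\eta$ on $C_n$, solvable because the values-to-$\Delta$-derivatives map on a finite cluster is invertible, and the clusters being disjoint they can be met independently and interpolated. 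For this $\tilde\eta$ every term of $R(t_n)$ involves $\tilde\eta^{\sigma^{m}\Delta^{j}}(t_n)=a_1^{mj}\tilde\eta^{\Delta^{j}}(\sigma^{r-1-j}(t_n))=0$, hence $B_{\tilde\eta}(t_n)=f_r(t_n)\tilde\eta^{\Delta^{r-1}}(t_n)=f_r(t_n)\eta^{\Delta^{r-1}}(t_n)$; by the previous paragraph applied to $\tilde\eta$, $B_{\tilde\eta}(T')\to0$, so $f_r(t_n)\eta^{\Delta^{r-1}}(t_n)\to0$. Since $(t_n)$ was arbitrary, $\lim_{T'\to+\infty}f_r(T')\eta^{\Delta^{r-1}}(T')=0$, which gives the assertion.

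The hard part is exactly this last step — manufacturing, for a prescribed cofinal sequence, an admissible variation that annihilates all the lower-order boundary contributions while agreeing with $\eta$ in the top $\Delta$-derivative at the sampling points — and this is where condition $(H)$ (so that $\sigma^k$ is affine and the clusters are controllable) and the upgrade from $\liminf$ to a genuine limit are indispensable; the rest is routine manipulation with Lemmas~\ref{integration-parts-higher-order} and \ref{lemmaderivadacomposta}. For $r=1$ the whole scheme degenerates to the ordinary integration by parts formula and Lemma~\ref{lemma4}, since there $B(t)=f_1(t)\eta(t)$ and $R\equiv0$.
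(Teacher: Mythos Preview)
Your reduction via Lemma~\ref{integration-parts-higher-order} and Lemma~\ref{Fund.Lemma.1} to a pure boundary expression $B(T')$ is exactly what the paper does, and your verification that $B(a)=0$ from the admissibility conditions alone is correct (every $\eta$-factor in $B$ is of the form $\eta^{\sigma^{m}\Delta^{j}}$ with $m+j=r-1$, and these vanish at $a$ by the argument you give). The $\pm\eta$ upgrade from $\liminf$ to a genuine limit is also sound.

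The divergence is in the isolation step. The paper does not try to recover the conclusion for an \emph{arbitrary} admissible $\eta$ by building an auxiliary variation; it simply restricts to those $\eta$ satisfying, in addition,
\[
\eta^{\sigma^{r-k}\Delta^{k-1}}(T')=\eta^{\sigma^{k}\Delta^{r-1-k}}(T')=0,\qquad k=1,\ldots,r-1,
\]
which kills every summand of $B(T')$ except $f_r(T')\eta^{\Delta^{r-1}}(T')$, and the hypothesis then gives the claim directly for this subclass. That is a one-line isolation. Your route instead keeps $\eta$ general and, for a chosen cofinal sequence $(t_n)$, manufactures $\tilde\eta$ with a prescribed $\Delta$-jet on each cluster $C_n$ so that $R(t_n)=0$ while $\tilde\eta^{\Delta^{r-1}}(t_n)=\eta^{\Delta^{r-1}}(t_n)$. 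This buys you the conclusion for \emph{all} admissible $\eta$, which is stronger than what the paper's argument literally yields; but the price is a construction that is only sketched. You must simultaneously meet infinitely many cluster constraints, interpolate between clusters, and still land in $C^{2r}_{rd}$ with the correct initial data. In the purely scattered regime under $(H)$ this is harmless (any function is $C^{\infty}_{rd}$, and the values-to-derivatives map on each $C_n$ is indeed triangular and invertible, as you assert), but when $a_1=1$, $a_0=0$ the clusters collapse to single points and the finite linear-algebra picture must be replaced by a jets-plus-cutoff construction that you do not supply. You yourself flag this as ``the hard part''; the paper sidesteps it entirely by restricting the class of variations rather than constructing within it.
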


\begin{proof}
Note that
\begin{equation*}
\begin{split}
\int_{a}^{T^\prime} & \left(\sum_{i=0}^{r} f_i(t)
\eta^{\sigma^{r-i}\Delta^{i}}(t) \Delta t\right) \\
&= \int_{a}^{T^\prime} f_0(t) \eta^{\sigma^r}(t)\Delta t
+ \sum_{i=1}^{r}\left( \int_{a}^{T^\prime} f_i(t)
\eta^{\sigma^{r-i}\Delta^{i}} (t) \Delta t\right)\\
&= \int_{a}^{T^\prime} f_0(t) \eta^{\sigma^r}(t)\Delta t\\
&\qquad + \sum_{i=1}^{r} \left[f_i(t)\eta^{\sigma^{r-i}\Delta^{i-1}} (t)
+ \sum_{k=1}^{i-1}(-1)^k f_i^{\Delta^k}(t) \eta^{\sigma^{r-i+k}\Delta^{i-1-k}}(t)
\cdot \prod_{j=1}^{k}\left(\frac{1}{a_1}\right)^{i-j}\right]_{a}^{T^\prime}\\
& \qquad + \sum_{i=1}^{r} \left((-1)^i
\int_{a}^{T^\prime} \left(\frac{1}{a_1}\right)^{\frac{i(i-1)}{2}}
f_i^{\Delta^i}(t) \eta^{\sigma^r}(t) \Delta t \right)
\quad \quad (\mbox{by Lemma~\ref{integration-parts-higher-order}})\\
&= \int_{a}^{T^\prime} \left(f_0(t)
+ \sum_{i=1}^{r} (-1)^i
f^{^{\Delta^i}}_i(t)\left(\frac{1}{a_1}\right)^{\frac{i(i-1)}{2}}\right)
\cdot \eta^{\sigma^r}(t) \Delta t \\
& \qquad + \sum_{i=1}^{r} \left[ \left(f_i(t)\eta^{\sigma^{r-i}\Delta^{i-1}}(t)
+ \sum_{k=1}^{i-1} (-1)^k f_i^{\Delta^k}(t) \eta^{\sigma^{r-i+k}\Delta^{i-1-k}}(t)
\cdot \prod_{j=1}^{k}\left(\frac{1}{a_1}\right)^{i-j}\right)\right]_{a}^{T^\prime}\\
&= \int_{a}^{T^\prime} \left( \sum_{i=0}^{r} (-1)^i f^{^{\Delta^i}}_i(t)\left(
\frac{1}{a_1}\right)^{\frac{i(i-1)}{2}}\right)\cdot \eta^{\sigma^r}(t) \Delta t \\
& \qquad + \quad \sum_{i=1}^{r-1} \left[ \left(f_i(t)\eta^{\sigma^{r-i}\Delta^{i-1}}(t)
+ \sum_{k=1}^{i-1} (-1)^k f_i^{\Delta^k}(t) \eta^{\sigma^{r-i+k}\Delta^{i-1-k}}(t)
\cdot \prod_{j=1}^{k}\left(\frac{1}{a_1}\right)^{i-j} \right)\right]_{a}^{T^\prime}\\
& \qquad + \left [ f_r(t)\eta^{\Delta^{r-1}}(t) +
\sum_{k=1}^{r-1} (-1)^k f_r^{\Delta^k}(t)
\eta^{\sigma^{k}\Delta^{r-1-k}}(t) \cdot \prod_{j=1}^{k}\left(
\frac{1}{a_1}\right)^{r-j} \right ]_{a}^{T^\prime}
\end{split}
\end{equation*}
and, by Lemma~\ref{Fund.Lemma.1}, we get
\begin{equation*}
\begin{split}
\int_{a}^{T^\prime} & \left(\sum_{i=0}^{r}
f_i(t) \eta^{\sigma^{r-i}\Delta^{i}}(t) \Delta t\right)\\
&= \sum_{i=1}^{r-1} \left[ \left(f_i(t)\eta^{\sigma^{r-i}\Delta^{i-1}}(t)
+ \sum_{k=1}^{i-1} (-1)^k f_i^{\Delta^k}(t) \eta^{\sigma^{r-i+k}\Delta^{i-1-k}}(t)
\cdot \prod_{j=1}^{k}\left(\frac{1}{a_1}\right)^{i-j} \right)\right]_{a}^{T^\prime}\\
& \qquad + \left [ f_r(t)\eta^{\Delta^{r-1}}(t)
+ \sum_{k=1}^{r-1} (-1)^k f_r^{\Delta^k}(t)
\eta^{\sigma^{k}\Delta^{r-1-k}}(t) \cdot \prod_{j=1}^{k}\left(
\frac{1}{a_1}\right)^{r-j} \right ]_{a}^{T^\prime}.
\end{split}
\end{equation*}
Therefore, restricting the variations $\eta$ to those such that
$$
\eta^{\sigma^{r-k}\Delta^{k-1}}(T^\prime)
=\eta^{\sigma^{r-k}\Delta^{k-1}}(a)=0, \quad k=1,\ldots, r-1,
$$
$$
\eta^{\sigma^{k}\Delta^{r-1-k}}(T^\prime)
=\eta^{\sigma^{k}\Delta^{r-1-k}}(a)=0, \quad k=1,\ldots, r-1,
$$
we get
$$
\lim_{T\rightarrow+\infty}
\inf_{T^\prime \geq T} \int_{a}^{T^\prime}
\left(\sum_{i=0}^{r}f_i(t) \eta^{\sigma^{r-i}\Delta^{i}}(t)
\right) \Delta t=0
\Rightarrow
\lim_{T\rightarrow+\infty}\inf_{T^\prime \geq T}
\left\{ f_r(T^\prime) \eta^{\Delta^{r-1}} (T^\prime) \right\}=0,
$$
proving the desired result.
\end{proof}

\begin{lemma}[Higher-order fundamental lemma of the calculus of variations III]
\label{Fund.Lemma.3}
Let $\mathbb{T}$ be a time scale satisfying condition $(H)$ and such that $\sup \mathbb{T}=+\infty$.
Suppose that $f_0 \in C_{rd}([a,+\infty[, \mathbb{R})$ and
$f_i \in C^{r}_{rd}([a,+\infty[, \mathbb{R})$ for all $i=1, \ldots, r$. If
$$
\lim_{T\rightarrow+\infty}  \inf_{T^\prime \geq
T} \displaystyle \int_{a}^{T^\prime}  \left(\sum_{i=0}^{r}f_i(t)
\eta^{\sigma^{r-i} \Delta^{i}}(t) \right)\Delta t=0
$$
for all $\eta \in C_{rd}^{2r}([a, +\infty[, \mathbb{R})$ such that
$\eta(a)=0$, $\eta^{\Delta}(a)=0$,
\ldots, $\eta^{\Delta^{r-1}}(a)=0$, then
$$
\lim_{T\rightarrow+\infty} \inf_{T^\prime \geq T}
\left\{\left(f_{r-(k-1)} (T^\prime) + \sum_{i=1}^{k-1}
(-1)^{i} \left(f_{r-(k-1)+i} \right)^{\Delta^i} (T^\prime)
\cdot \Psi_i^r(k) \right)
\cdot \eta^{\sigma^{k-1}\Delta^{r-k}}(T^\prime)\right\} =0
$$
for all $k=1,\ldots,r$, where
$\displaystyle \Psi_i^r(k) = \prod_{j=1}^{i}\left(\frac{1}{a_1}\right)^{r-(k-1)+(j-1)}$.
\end{lemma}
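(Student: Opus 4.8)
The plan is to run the same argument as in the proof of Lemma~\ref{first-transversality}, but instead of throwing away all the boundary contributions except the one carrying $\eta^{\Delta^{r-1}}$, to keep track of every one of them. First I would rewrite $\int_a^{T'}\sum_{i=0}^r f_i(t)\,\eta^{\sigma^{r-i}\Delta^i}(t)\,\Delta t$ exactly as there: apply the higher-order integration-by-parts formula of Lemma~\ref{integration-parts-higher-order} to each term with $i\ge 1$, collect the resulting $\eta^{\sigma^r}$-integrand, and invoke Lemma~\ref{Fund.Lemma.1} (applicable because its hypothesis is precisely the hypothesis at hand, and the regularity assumptions here are stronger) to conclude that this Euler--Lagrange integrand vanishes identically. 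What survives is the purely boundary expression
$$
\int_a^{T'}\sum_{i=0}^r f_i(t)\,\eta^{\sigma^{r-i}\Delta^i}(t)\,\Delta t
= \sum_{i=1}^{r}\left[f_i(t)\,\eta^{\sigma^{r-i}\Delta^{i-1}}(t) + \sum_{\ell=1}^{i-1}(-1)^\ell f_i^{\Delta^\ell}(t)\,\eta^{\sigma^{r-i+\ell}\Delta^{i-1-\ell}}(t)\cdot\prod_{j=1}^\ell\Big(\frac{1}{a_1}\Big)^{i-j}\right]_a^{T'}.
$$

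The key observation is that every shifted delta derivative of $\eta$ that occurs here, namely $\eta^{\sigma^{r-i+\ell}\Delta^{i-1-\ell}}$ with $0\le\ell\le i-1$, has total order ($\sigma$-count plus $\Delta$-count) equal to $r-1$; hence it is $\eta^{\sigma^{k-1}\Delta^{r-k}}$ for exactly one $k\in\{1,\dots,r\}$, namely $k=r-i+\ell+1$. Grouping the double sum by this value of $k$ and reindexing the contributing pairs as $i=r-(k-1)+m$, $\ell=m$, $m=0,\dots,k-1$, I would check that the coefficient of $\eta^{\sigma^{k-1}\Delta^{r-k}}$ is precisely
$$
\Phi_k(t):=f_{r-(k-1)}(t)+\sum_{i=1}^{k-1}(-1)^i\big(f_{r-(k-1)+i}\big)^{\Delta^i}(t)\cdot\Psi_i^r(k),
$$
the only nonobvious algebraic point being that, after the substitution $i=r-(k-1)+m$, the product $\prod_{j=1}^{m}(1/a_1)^{i-j}$ coming from Lemma~\ref{integration-parts-higher-order} is just a reversal/reindexing of $\prod_{j=1}^m(1/a_1)^{r-(k-1)+(j-1)}=\Psi_m^r(k)$. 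This yields the compact identity
$$
\int_a^{T'}\sum_{i=0}^r f_i(t)\,\eta^{\sigma^{r-i}\Delta^i}(t)\,\Delta t=\sum_{k=1}^{r}\Big[\Phi_k(t)\,\eta^{\sigma^{k-1}\Delta^{r-k}}(t)\Big]_a^{T'},
$$
valid for every admissible $\eta$.

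Next I would fix $k\in\{1,\dots,r\}$ and, as in Lemma~\ref{first-transversality}, restrict the variations $\eta$ to those with $\eta^{\sigma^{\kappa-1}\Delta^{r-\kappa}}(T')=0$ for all $\kappa\ne k$ and $\eta^{\sigma^{\kappa-1}\Delta^{r-\kappa}}(a)=0$ for all $\kappa=1,\dots,r$. For $\kappa=1$ the condition at $a$ is just admissibility ($\eta^{\Delta^{r-1}}(a)=0$); for $\kappa\ge 2$ it holds automatically when $a$ is right-dense, and when $a$ is right-scattered it can be imposed after using the chain rule of Lemma~\ref{lemmaderivadacomposta} (iterated exactly as in the proof of Lemma~\ref{lemma_funcoes_admissiveis_1}) to rewrite $\eta^{\sigma^{\kappa-1}\Delta^{r-\kappa}}(a)$ as a nonzero multiple of $\eta^{\Delta^{r-\kappa}}(\sigma^{\kappa-1}(a))$. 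Under these restrictions all boundary terms cancel except $\Phi_k(T')\,\eta^{\sigma^{k-1}\Delta^{r-k}}(T')$, so the identity above collapses to $\int_a^{T'}\sum_i f_i(t)\,\eta^{\sigma^{r-i}\Delta^i}(t)\,\Delta t=\Phi_k(T')\,\eta^{\sigma^{k-1}\Delta^{r-k}}(T')$, and the hypothesis $\lim_{T\to+\infty}\inf_{T'\ge T}\int_a^{T'}\sum_i f_i(t)\,\eta^{\sigma^{r-i}\Delta^i}(t)\,\Delta t=0$ delivers exactly the asserted transversality-type limit for this $k$.

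The main obstacle is the bookkeeping in the grouping step: one must verify that, as $(i,\ell)$ runs over all pairs with $r-i+\ell=k-1$, the signs, the $(1/a_1)$-products, and the shifts/derivatives of the $f_i$ assemble into precisely $\Phi_k$ with the stated $\Psi_i^r(k)$ — this is where an error is most likely, and where the product-reversal identity does the work. A secondary, more routine point is to justify in the right-scattered case that the extra vanishing conditions on $\eta$ at $a$ are genuinely compatible with admissibility, which follows by the iteration of Lemma~\ref{lemma_funcoes_admissiveis_1} indicated above.
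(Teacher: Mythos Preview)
Your approach is correct and yields the result, but it differs from the paper's proof. You argue directly: apply the higher-order integration-by-parts formula (Lemma~\ref{integration-parts-higher-order}) in full, invoke Lemma~\ref{Fund.Lemma.1} to annihilate the integral term, regroup the surviving boundary contributions by the value of $k$ via the reindexing $i=r-(k-1)+m$, $\ell=m$, and then restrict $\eta$ to isolate each $k$ separately. Your product-reversal check (that $\prod_{j=1}^m(1/a_1)^{i-j}$ with $i=r-(k-1)+m$ equals $\Psi_m^r(k)$) is correct, since both products run over the same multiset of exponents $\{r-(k-1),\ldots,r-(k-1)+m-1\}$. Incidentally, the vanishing of $\eta^{\sigma^{\kappa-1}\Delta^{r-\kappa}}(a)$ is in fact automatic from admissibility for every $\kappa$, not just $\kappa=1$: iterating $f^\sigma=f+\mu f^\Delta$ shows $\eta^{\Delta^m}(\sigma^j(a))=0$ whenever $j+m\le r-1$, so nothing need be ``imposed'' at $t=a$.

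The paper instead proceeds by induction on $r$. The base case $r=1$ is a single integration by parts together with Lemma~\ref{Fund.Lemma.1}. For the inductive step from $r$ to $r+1$, the paper integrates by parts once (the $i=r+1$ term), restricts $\eta$ so that the resulting boundary term drops out, and observes that what remains is exactly the order-$r$ hypothesis applied to $\eta^\sigma$ (with $f_r$ replaced by $f_r-(1/a_1)^r f_{r+1}^\Delta$); the induction hypothesis at index $k-1$ then gives, after checking that the $\Psi_i^r(k-1)$ coming from level $r$ coincide with the $\Psi_i^{r+1}(k)$ needed at level $r+1$, the transversality condition of index $k$ for $k=2,\ldots,r+1$, while $k=1$ is supplied by Lemma~\ref{first-transversality}. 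Your direct grouping trades this level-shifting bookkeeping for the reindexing algebra you flag; the paper's induction avoids that algebra but must instead verify the compatibility of the $\Psi$-products across levels. Both routes handle the restriction of $\eta$ at $T'$ in the same informal manner already used in Lemma~\ref{first-transversality}.
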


\begin{proof}
We do the proof by induction. Let $r=1$.
Using the integration by parts formula
and Lemma~\ref{Fund.Lemma.1}, we obtain
$\lim_{T\rightarrow+\infty} \inf_{T^\prime \geq T}
f_1(T^\prime) \eta(T^\prime)=0$,
showing that the result is true for $r=1$.
Assuming now that the result holds for an arbitrary $r$,
we will prove it for $r+1$. Suppose that
$$
\lim_{T\rightarrow+\infty}\inf_{T^\prime \geq
T} \int_{a}^{T^\prime}
\left(\sum_{i=0}^{r+1}f_i(t) \eta^{\sigma^{r+1-i}\Delta^{i}}(t)
\right) \Delta t=0
$$
for all $\eta \in C_{rd}^{2(r+1)}([a, +\infty[,\mathbb{R})$ such that
$\eta^{\Delta^{i}}(a)=0$, $i=0, \ldots, r$. We want to prove that
\begin{multline}
\label{tese-de-inducao}
\lim_{T\rightarrow+\infty} \inf_{T^\prime \geq T}
\Biggl\{\left (f_{r+1-(k-1)} (T^\prime) + \sum_{i=1}^{k-1} (-1)^{i}
\left(f_{r+1-(k-1)+i} \right)^{\Delta^i} (T^\prime)
\cdot \Psi_i^{r+1}(k) \right)\\
\times \eta^{\sigma^{k-1}\Delta^{r+1-k}}(T^\prime)\Biggr\} = 0
\end{multline}
for $k=1, \ldots, r+1$. Fix some $k=2,\ldots, r+1$. The main idea of the proof
is that the $k$th transversality condition for the variational
problem of order $r+1$ is obtained from the $(k-1)$th transversality
condition for the variational problem of order $r$.
Using the same techniques as in Lemma~\ref{Fund.Lemma.1}, we prove that
$$
\lim_{T\rightarrow+\infty}\inf_{T^\prime \geq
T} \int_{a}^{T^\prime}
\left(\sum_{i=0}^{r+1}f_i(t) \eta^{\sigma^{r+1-i}\Delta^{i}}(t)
\right) \Delta t=0
$$
implies
$$
\lim_{T\rightarrow+\infty}\inf_{T^\prime \geq
T} \left\{\int_{a}^{T^\prime}
\left(\sum_{i=0}^{r-1}f_i(t) \left(\eta^\sigma
\right)^{\sigma^{r-i}\Delta^{i}}(t)
 + \left( f_r(t) - f_{r+1}^{\Delta}(t)\left(
\frac{1}{a_1}\right)^{r}\right) (\eta^\sigma)^{\Delta^r}(t)
 \right) \Delta t\right\}=0.
$$
Since, by Lemma~\ref{lemma_funcoes_admissiveis_1},
$\eta^\sigma(a) = 0$, $(\eta^\sigma)^{\Delta}(a)= 0$,
\ldots, $(\eta^\sigma)^{\Delta^{r-1}}(a)=0$, then,
by the induction hypothesis for $k-1$, we conclude that
\begin{multline*}
\lim_{T\rightarrow+\infty} \inf_{T^\prime \geq T}
\Bigg\{\Bigg(\sum_{i=1}^{k-3} (-1)^{i}
\Big(f_{r-(k-2)+i} \Big)^{\Delta^i} (T^\prime)\cdot \Psi_i^r(k-1)
+ (-1)^{k-2}\Big(f_{r} \Big)^{\Delta^{k-2}} (T^\prime) \cdot \Psi_{k-2}^r(k-1)\\
+ f_{r-(k-2)} (T^\prime) + (-1)^{k-1}\Big(f_{r+1} \Big)^{\Delta^{k-1}} (T^\prime)
\cdot \Psi_{k-2}^{r}(k-1) \Big(\frac{1}{a_1}\Big)^{r}\Bigg)
\cdot (\eta^\sigma)^{\sigma^{k-2}\Delta^{r-(k-1)}}(T^\prime)\Bigg\} = 0,
\end{multline*}
which is equivalent to
\begin{multline*}
\lim_{T\rightarrow+\infty} \inf_{T^\prime \geq T}
\Biggl\{\left(f_{r-(k-2)} (T^\prime)
+ \sum_{i=1}^{k-1} (-1)^{i} \left(f_{r-(k-2)+i} \right)^{\Delta^i} (T^\prime)
\cdot \Psi_i^r(k-1)\right)\\
\times \eta^{\sigma^{k-1}\Delta^{r-(k-1)}}(T^\prime)\Biggr\} = 0.
\end{multline*}
This proves equation \eqref{tese-de-inducao} for $k=2,\ldots, r+1$.
It remains to prove that equation \eqref{tese-de-inducao} is also true for $k=1$.
This condition follows from  Lemma~\ref{first-transversality}.
\end{proof}


\subsection{Proof of Theorem~\ref{main:result}}
\label{E-L_and_Transversality}

Using our notion of weak maximality, if $x_{\ast}$ is optimal, then
$V(\varepsilon) \leq 0$ for every $\varepsilon \in \mathbb{R}$.
Since $V(0)=0$, then 0 is an extremal point of $V$.
We will prove that $V$ is differentiable at $t=0$, and hence
$V^\prime(0)=0$. Note that
\begin{equation*}
\begin{split}
V^\prime(0) &=
\lim_{\varepsilon \rightarrow 0} \frac{V(\varepsilon)}{\varepsilon}
= \lim_{\varepsilon \rightarrow 0}
\lim_{T\rightarrow+\infty}\frac{V(\varepsilon, T) }{\varepsilon}\\
&= \lim_{T\rightarrow+\infty}
\lim_{\varepsilon \rightarrow 0} \frac{V(\varepsilon, T) }{\varepsilon}
\qquad (\mbox{by hypothesis \emph{1} and \emph{2} and Theorem \ref{Serge:Lang})}\\
&= \lim_{T\rightarrow+\infty}
\lim_{\varepsilon \rightarrow 0} \displaystyle \inf_{T^\prime \geq T} A(\varepsilon, T^\prime)\\
&= \lim_{T\rightarrow+\infty}
\lim_{\varepsilon \rightarrow 0} \lim_{n \rightarrow +\infty} A(\varepsilon, T^\prime_n)
\qquad (\mbox{by hypothesis \emph{3})}\\
&= \lim_{T\rightarrow+\infty}
\lim_{n \rightarrow +\infty} \lim_{\varepsilon \rightarrow 0} A(\varepsilon, T^\prime_n)
\qquad (\mbox{by hypothesis \emph{3} and Theorem \ref{Serge:Lang}})\\
&= \lim_{T\rightarrow+\infty}
\inf_{T^\prime \geq T} \lim_{\varepsilon \rightarrow 0} A(\varepsilon, T^\prime)
\qquad (\mbox{by hypothesis \emph{3}})\\
&= \lim_{T\rightarrow+\infty}
\inf_{T^\prime \geq T}  \lim_{\varepsilon \rightarrow 0}
\int_{a}^{T^\prime} \frac{L\langle x_{\ast} + \epsilon\eta \rangle^r(t)
- L\langle x_{\ast}\rangle^r(t)}{\varepsilon} \Delta t\\
&= \lim_{T\rightarrow+\infty}  \inf_{T^\prime \geq T}
\int_{a}^{T^\prime} \lim_{\varepsilon \rightarrow 0}
\frac{L\langle x_{\ast} + \epsilon\eta \rangle^r(t)
- L\langle x_{\ast}\rangle^r(t)}{\varepsilon} \Delta t\\
&= \lim_{T\rightarrow+\infty}  \inf_{T^\prime \geq T}
\int_{a}^{T^\prime}  \left(\sum_{i=0}^{r}\partial_{i+2}
L\langle x_{\ast}\rangle^r(t) \cdot \eta^{\sigma^{r-i} \Delta^{i}}(t) \right)\Delta t.
\end{split}
\end{equation*}
Therefore,
$$
\lim_{T\rightarrow+\infty}  \inf_{T^\prime \geq
T} \displaystyle \int_{a}^{T^\prime}  \left(\sum_{i=0}^{r}\partial_{i+2}
L\langle x_{\ast}\rangle^r(t)\cdot \eta^{\sigma^{r-i} \Delta^{i}}(t) \right)\Delta t=0.
$$
Using Lemma~\ref{Fund.Lemma.1}, we conclude that
\begin{equation*}
\sum_{i=0}^{r} (-1)^i
\left(\frac{1}{a_1}\right)^{\frac{i(i-1)}{2}}\left(
\partial_{i+2} L\right)^{\Delta^i}
\langle x_{\ast}\rangle^r(t) =0
\quad \forall t \in [a,+\infty[ \, ,
\end{equation*}
proving that $x_{\ast}$ satisfies
the Euler--Lagrange equation \eqref{E-L-equation}.
By Lemma~\ref{Fund.Lemma.3}, we conclude that
\begin{multline}
\label{tranversality_p}
\lim_{T\rightarrow+\infty} \inf_{T^\prime \geq T}
\Biggl\{\left(\partial_{r+2-(k-1)} L\langle x_{\ast}\rangle^r(T^\prime)
+ \sum_{i=1}^{k-1} (-1)^{i} \left(\partial_{r+2-(k-1)+i} L \right)^{\Delta^i}
\langle x_{\ast}\rangle^r(T^\prime) \cdot \Psi_i^r(k) \right)\\
\times \eta^{\sigma^{k-1}\Delta^{r-k}}(T^\prime)\Biggr\} =0
\end{multline}
for $k=1,\ldots, r$. Consider $\eta$ defined by
$\eta(t)=\alpha(t) x_{\ast}(t)$ for all $t \in [a, +\infty[$,
where $\alpha: [a, +\infty[ \rightarrow \mathbb{R}$ is a $C^{2r}_{rd}$
function satisfying $\alpha (a)=0, \alpha^{\Delta}(a)=0, \ldots,
\alpha^{\Delta^{r-1}}(a)=0$, and there exists $T_0\in \mathbb{T}$
such that $\alpha(t)=\beta \in \mathbb{R}\setminus\{0\}$ for all $t> T_0$.
Note that $\eta(a)=0, \eta^{\Delta}(a)=0, \ldots, \eta^{\Delta^{r-1}}(a)=0$.
Substituting $\eta$ in  equation \eqref{tranversality_p}, we conclude that
\begin{multline*}
\lim_{T\rightarrow+\infty} \inf_{T^\prime \geq T}
\Biggl\{\left(\partial_{r+2-(k-1)} L\langle x_{\ast}\rangle^r(T^\prime)
+ \sum_{i=1}^{k-1} (-1)^{i} \left(\partial_{r+2-(k-1)+i} L \right)^{\Delta^i}
\langle x_{\ast}\rangle^r(T^\prime)\cdot \Psi_i^r(k)\right)\\
\times x_{\ast}^{\sigma^{k-1}\Delta^{r-k}}(T^\prime)\Biggr\} =0,
\end{multline*}
proving that $x_{\ast}$ satisfies the transversality
condition \eqref{tranversality} for all $k=1,\ldots,r$.


\subsection{Remarks and Corollaries}
\label{E-L_and_Transversality:disc}

Note that we have actually proved that, for an infinite horizon
variational problem of order $r$, one has $r$ transversality
conditions and that, for each $k=1,\ldots, r$, the $k$th transversality
condition has exactly $k$ terms. To the best of our knowledge,
even for the classical calculus of variations (\textrm{i.e.},
when $\mathbb{T}=\mathbb{R}$ or $\mathbb{T}=\mathbb{Z}$)
our explicit formulas for the transversality conditions are new.

Similarly to the special case when
$\mathbb{T} = \mathbb{R}$ (see \cite{Nitta-et-all-2009}) and
when $\mathbb{T} = \mathbb{Z}$ (see \cite{Nitta-et-all-2010}),
hypotheses 1, 2, and 3 of Theorem~\ref{main:result} are impossible to be
verified \emph{a priori} because $x_{\ast}$ is unknown. In practical terms,
such hypotheses are assumed to be true and conditions
\eqref{E-L-equation} and \eqref{tranversality} are applied heuristically
to obtain a \emph{candidate}. If such a candidate is, or not, a solution
to the problem is a different question that always require further analysis
(see Examples~\ref{ex:1} and \ref{ex:2} in Section~\ref{sec:il:ex}).

For the convenience of the reader, we present the particular cases
of Theorem~\ref{main:result} for r=1, $r=2$, and $r=3$.

\begin{corollary}{\rm \cite{MMT-2010}}
Assuming hypotheses of Theorem~\ref{main:result} for $r=1$,
if $x_\ast$ is a maximizer to problem \eqref{problem},
then $x_\ast$ satisfies the Euler--Lagrange equation
$$
(\partial_3 L)^\Delta (t, x^{\sigma}(t), x^{\Delta}(t))
= \partial_2 L (t, x^{\sigma}(t), x^{\Delta}(t) )
\quad \forall t \in [a, +\infty[ \, ,
$$
and the transversality condition
$$
\lim_{T\rightarrow+\infty} \inf_{T^\prime \geq T} \left\{
\partial_3 L(T^\prime, x^{\sigma}(T^\prime), x^{\Delta}(T^\prime))
\cdot x(T^\prime)\right\}=0.
$$
\end{corollary}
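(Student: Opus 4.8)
The plan is to simply specialize Theorem~\ref{main:result} to the case $r=1$ and unwind the notation. Since $r=1$, condition $(H)$ is automatically satisfied for an arbitrary time scale $\mathbb{T}$ (see Remark~\ref{rem:rest:H}), so no restriction is needed. The operator $\langle x\rangle^1(t)$ reduces to $\bigl(t, x^{\sigma}(t), x^{\Delta}(t)\bigr)$, and the admissible class becomes $x \in C^2_{rd}([a,+\infty[,\mathbb{R})$ with the single boundary condition $x(a)=\alpha_0$; correspondingly the variations $\eta$ satisfy only $\eta(a)=0$.

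First I would write out the Euler--Lagrange equation \eqref{E-L-equation} for $r=1$. The sum $\sum_{i=0}^{1}(-1)^i (1/a_1)^{i(i-1)/2}(\partial_{i+2}L)^{\Delta^i}\langle x\rangle^1(t)$ has two terms: the $i=0$ term is $\partial_2 L\langle x\rangle^1(t)$ (note $\tfrac{0\cdot(-1)}{2}=0$ so the $a_1$-factor is $1$), and the $i=1$ term is $-(\partial_3 L)^{\Delta}\langle x\rangle^1(t)$ (again $\tfrac{1\cdot 0}{2}=0$). Setting the sum to zero gives $\partial_2 L(t,x^\sigma(t),x^\Delta(t)) - (\partial_3 L)^{\Delta}(t,x^\sigma(t),x^\Delta(t)) = 0$, which is precisely the stated Euler--Lagrange equation after rearranging.

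Next I would extract the transversality condition \eqref{tranversality} for $r=1$. There is exactly one condition, namely $k=1$. For $k=1$ the inner sum $\sum_{i=1}^{k-1}=\sum_{i=1}^{0}$ is empty by the convention adopted in \S\ref{Fundamental_Lemmas}, so the bracketed expression collapses to $\partial_{r+2-(k-1)}L\langle x\rangle^1(T^\prime) = \partial_3 L\langle x\rangle^1(T^\prime)$, and the trailing factor is $x^{\sigma^{k-1}\Delta^{r-k}}(T^\prime) = x^{\sigma^0\Delta^0}(T^\prime)=x(T^\prime)$. Hence \eqref{tranversality} becomes $\lim_{T\to+\infty}\inf_{T^\prime\ge T}\bigl\{\partial_3 L(T^\prime,x^\sigma(T^\prime),x^\Delta(T^\prime))\cdot x(T^\prime)\bigr\}=0$, as claimed.

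There is essentially no obstacle here: the corollary is a direct transcription of Theorem~\ref{main:result} at $r=1$, and the only points requiring a word of care are (i) recording that $(H)$ is vacuous when $r=1$, and (ii) applying the empty-sum convention to see that the single transversality condition has just one term. I would state these two observations and then simply display the resulting two equations.
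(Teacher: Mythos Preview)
Your proposal is correct and is exactly the approach the paper takes: the corollary is stated without proof as the immediate specialization of Theorem~\ref{main:result} to $r=1$, and your unwinding of the indices (including the empty-sum convention for $k=1$ and the vacuity of $(H)$ when $r=1$) matches what is implicit in the paper.
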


\begin{corollary}
Assuming hypotheses of Theorem~\ref{main:result} for $r=2$,
if $x_\ast$ is a maximizer to problem \eqref{problem},
then $x_\ast$ satisfies the Euler--Lagrange equation
$$
\partial_2 L\langle x \rangle^2(t)
- \left(\partial_3 L\right)^\Delta \langle x \rangle^2(t)
+ \frac{1}{a_1} \left(\partial_4 L\right)^{\Delta^2} \langle x \rangle^2(t)=0
\quad \forall t \in [a,+\infty[ \, ,
$$
and the two transversality conditions
$$
\lim_{T\rightarrow+\infty} \inf_{T^\prime \geq T}\left\{
\partial_4 L\langle x\rangle^2(T^\prime) \cdot x^{\Delta}(T^\prime)\right\}=0,
$$
$$
\lim_{T\rightarrow+\infty} \inf_{T^\prime \geq T}\left\{\left(\partial_3 L\langle x \rangle^2(T^\prime)
- \frac{1}{a_1} \left(\partial_4 L\right)^{\Delta}\langle x \rangle^2(T^\prime) \right)
\cdot x^{\sigma}(T^\prime)\right\}=0.
$$
\end{corollary}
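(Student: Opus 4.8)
The plan is to derive this corollary as the special case $r = 2$ of Theorem~\ref{main:result}; no independent argument is required, only an explicit evaluation of the general formulas \eqref{E-L-equation} and \eqref{tranversality} at $r = 2$. First I would observe that the hypotheses of Theorem~\ref{main:result} for $r=2$ are assumed and that $x_\ast$ is a maximizer of \eqref{problem}, so the theorem applies verbatim and it remains only to unfold its conclusions.

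For the Euler--Lagrange equation, I would expand $\sum_{i=0}^{2}(-1)^{i}(1/a_1)^{i(i-1)/2}(\partial_{i+2}L)^{\Delta^{i}}\langle x\rangle^{2}(t)$ term by term. The exponents $i(i-1)/2$ equal $0$, $0$, $1$ for $i = 0, 1, 2$, so the three summands are $\partial_2 L\langle x\rangle^2(t)$, $-(\partial_3 L)^{\Delta}\langle x\rangle^2(t)$ and $\tfrac{1}{a_1}(\partial_4 L)^{\Delta^2}\langle x\rangle^2(t)$, whose sum is the displayed equation, valid for all $t \in [a,+\infty[$.

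For the transversality conditions, I would specialize \eqref{tranversality} to $k = 1$ and $k = 2$. When $k = 1$, the inner sum $\sum_{i=1}^{k-1}$ is empty by the convention $\sum_{i=1}^{0}=0$, the coefficient is $\partial_{r+2-(k-1)}L = \partial_4 L$, and the multiplier $x^{\sigma^{k-1}\Delta^{r-k}}$ reduces to $x^{\Delta}$; this gives the first stated condition. When $k = 2$, the inner sum contributes only the term $i = 1$, namely $-(\partial_4 L)^{\Delta}\langle x\rangle^2(T^\prime)\,\Psi_1^2(2)$ with $\Psi_1^2(2) = \prod_{j=1}^{1}(1/a_1)^{2-(2-1)+(j-1)} = (1/a_1)^{1} = 1/a_1$; the base coefficient is $\partial_{r+2-(k-1)}L = \partial_3 L$, and the multiplier $x^{\sigma^{k-1}\Delta^{r-k}}$ reduces to $x^{\sigma}$; this gives the second stated condition.

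The argument is purely computational, and there is no genuine obstacle. The only point demanding some care is the index bookkeeping: one must check that the mixed shift $\sigma^{k-1}\Delta^{r-k}$ degenerates correctly (to $\Delta$ for $k=1$ and to $\sigma$ for $k=2$ when $r=2$) and that $\Psi_i^r(k)$ collapses to the single factor $1/a_1$ in the only instance where it is non-void.
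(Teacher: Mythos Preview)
Your proposal is correct and matches the paper's approach: the corollary is stated without proof as an immediate specialization of Theorem~\ref{main:result}, and your explicit unfolding of the Euler--Lagrange sum and of the transversality formula \eqref{tranversality} at $r=2$, $k=1,2$ (including the computation $\Psi_1^2(2)=1/a_1$ and the reductions $x^{\sigma^{0}\Delta^{1}}=x^{\Delta}$, $x^{\sigma^{1}\Delta^{0}}=x^{\sigma}$) is exactly what is intended.
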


\begin{corollary}
Assuming hypotheses of Theorem~\ref{main:result}
for $r=3$, if $x_\ast$ is a maximizer to problem
\eqref{problem}, then $x_\ast$ satisfies the Euler--Lagrange equation
$$
\partial_2 L\langle x\rangle^3(t) - \left(\partial_3 L\right)^\Delta \langle x\rangle^3(t)
+ \left(\frac{1}{a_1}\right) \left(\partial_4 L\right)^{\Delta^2} \langle x \rangle^3(t)
- \left(\frac{1}{a_1}\right)^3 \left(\partial_5 L\right)^{\Delta^3} \langle x \rangle^3(t) =0
\quad \forall t \in [a,+\infty[\, ,
$$
and the three transversality conditions
$$
\lim_{T\rightarrow+\infty} \inf_{T^\prime \geq T}\left\{
\partial_5 L\langle x \rangle^3(T^\prime) \cdot x^{\Delta^2}(T^\prime)\right\}=0,
$$
$$
\lim_{T\rightarrow+\infty} \inf_{T^\prime \geq T}\left\{\left(\partial_4 L\langle x \rangle^3(T^\prime)
- \left(\frac{1}{a_1}\right)^2 \left(\partial_5 L\right)^{\Delta}\langle x\rangle^3
(T^\prime)\right)\cdot x^{\sigma\Delta}(T^\prime)\right\}=0,
$$
$$
\lim_{T\rightarrow+\infty} \inf_{T^\prime \geq T}\left\{\left(\partial_3 L\langle x \rangle^3(T^\prime)
- \left(\frac{1}{a_1}\right) \left(\partial_4 L\right)^{\Delta} \langle x \rangle^3(T^\prime)
+ \left(\frac{1}{a_1}\right)^3 \left(\partial_5 L\right)^{\Delta^2} \langle x\rangle^3(T^\prime)
\right) x^{\sigma^2}(T^\prime)\right\}=0.
$$
\end{corollary}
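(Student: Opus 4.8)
The corollary is obtained as a direct specialization of Theorem~\ref{main:result} to the case $r=3$, so the plan is simply to substitute $r=3$ into the general formulas \eqref{E-L-equation} and \eqref{tranversality} and then evaluate every index, sign, and exponent explicitly. Because hypotheses 1, 2, and 3 of the theorem are exactly the hypotheses assumed in the corollary, no new analytic verification is required; what remains is entirely bookkeeping. I would open by noting that $x_\ast$, being a maximizer of \eqref{problem}, satisfies both \eqref{E-L-equation} and \eqref{tranversality} with $r=3$, and then unpack each in turn.

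First I would expand the Euler--Lagrange sum \eqref{E-L-equation}. For $r=3$ the index $i$ runs over $0,1,2,3$, the sign $(-1)^i$ alternates, and the exponent $\frac{i(i-1)}{2}$ takes the successive values $0,0,1,3$. Pairing each $i$ with the corresponding partial derivative $\partial_{i+2}L$ then yields the four terms $\partial_2 L$, $-(\partial_3 L)^\Delta$, $\frac{1}{a_1}(\partial_4 L)^{\Delta^2}$, and $-(1/a_1)^3(\partial_5 L)^{\Delta^3}$ (each applied to $\langle x\rangle^3$), which is precisely the stated Euler--Lagrange equation.

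Next I would produce the three transversality conditions by setting $k=1,2,3$ in \eqref{tranversality}. For each $k$ the leading partial-derivative index is $r+2-(k-1)=5-(k-1)$, giving $\partial_5 L,\partial_4 L,\partial_3 L$; the factored variation is $x^{\sigma^{k-1}\Delta^{r-k}}$, giving $x^{\Delta^2}$, $x^{\sigma\Delta}$, $x^{\sigma^2}$; and the inner sum $\sum_{i=1}^{k-1}$ contributes zero, one, and two correction terms, respectively. The one genuinely error-prone step is evaluating the coefficients $\Psi_i^3(k)=\prod_{j=1}^{i}(1/a_1)^{3-(k-1)+(j-1)}$: for $k=2$ one gets $\Psi_1^3(2)=(1/a_1)^2$, while for $k=3$ one gets $\Psi_1^3(3)=1/a_1$ and, multiplying the two factors $(1/a_1)^1$ and $(1/a_1)^2$, $\Psi_2^3(3)=(1/a_1)^3$. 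Substituting these values assembles exactly the three displayed transversality conditions.

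The main obstacle here is notational rather than conceptual: one must keep the compound superscript operators $\sigma^{k-1}\Delta^{r-k}$, the shifting derivative orders $\Delta^i$ inside the sum, and the accumulating powers of $1/a_1$ coming from the $\Psi_i^3(k)$ products all correctly aligned, so that the exponents $0,0,1,3$ in the Euler--Lagrange equation and the exponents $2$, then $1$ and $3$, in the transversality coefficients come out as written. No argument beyond the already-established Theorem~\ref{main:result} is needed.
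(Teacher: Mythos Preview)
Your proposal is correct and matches the paper's approach: the corollary is stated in the paper without a separate proof, as an immediate specialization of Theorem~\ref{main:result} obtained by setting $r=3$ and unpacking the exponents $\frac{i(i-1)}{2}$ and the products $\Psi_i^3(k)$, exactly as you describe. Your computations of the coefficients $(1/a_1)^{0,0,1,3}$ in the Euler--Lagrange equation and of $\Psi_1^3(2)=(1/a_1)^2$, $\Psi_1^3(3)=1/a_1$, $\Psi_2^3(3)=(1/a_1)^3$ in the transversality conditions are all accurate.
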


Considering $\mathbb{T}=\mathbb{R}$ in Theorem~\ref{main:result},
we get the following result that improves the results
of \cite{Nitta-et-all-2009}.

\begin{corollary}
Consider the problem
\begin{equation}
\label{problem1}
\begin{gathered}
\int_{a}^{+\infty} L\left(t, x(t), x^\prime(t),
\ldots, x^{(r)}(t)\right) dt  \longrightarrow \max \\
x \in C^{r}([a,+\infty[,\mathbb{R})\\
x(a)=\alpha_0, \quad \ldots, \quad x^{(r-1)}(a)=\alpha_{r-1},
\end{gathered}
\end{equation}
where $(u_0,\ldots, u_{r})\rightarrow L(t,u_0,\ldots,u_{r})$
is a $C^1(\mathbb{R}^{r+1}, \mathbb{R})$
function for any $t \in [a,+\infty[$,
$\alpha_0, \ldots, \alpha_{r-1}$ are fixed real numbers,
and $\partial_{i+2} L\in C^{r}([a,+\infty[,\mathbb{R})$
for all $i=1, \ldots, r$ and all $x \in C^{r}([a,+\infty[,\mathbb{R})$.
Suppose that the maximizer to problem \eqref{problem1} exists and
is given by $x_{\ast}$. Let $\eta \in C^{r}([a,+\infty[,\mathbb{R}^n)$
be such that $\eta(a)=0, \ldots, \eta^{(r-1)}(a)=0$. Define
\begin{equation*}
\begin{split}
A(\varepsilon, T^\prime) &:= \int_{a}^{T^\prime}
\frac{L\left(t, x_{\ast}(t) + \varepsilon \eta(t),\ldots,
x_{\ast}^{(r)}(t) + \varepsilon \eta^{(r)}(t)\right)
- L\left(t, x_{\ast}(t),\ldots,  x_{\ast}^{(r)}(t)\right)}{\varepsilon} dt,\\
V(\varepsilon, T) &:= \inf_{T^\prime \geq T}
\int_{a}^{T^\prime} \left[L\left(t, x_{\ast}(t) + \varepsilon \eta(t),
\ldots, x_{\ast}^{(r)}(t) + \varepsilon \eta^{(r)}(t)\right)
- L(t, x_{\ast}(t),\ldots,  x_{\ast}^{(r)}(t))\right] dt, \\
V(\varepsilon) &:= \lim_{T\rightarrow+\infty} V(\varepsilon, T).
\end{split}
\end{equation*}
Suppose that
\begin{enumerate}

\item $\displaystyle \lim_{\varepsilon \rightarrow 0}
\frac{V(\varepsilon, T) }{\varepsilon}$ exists for all $T$;

\item $\displaystyle \lim_{T\rightarrow+\infty}\frac{V(\varepsilon, T)}{\varepsilon}$
exists uniformly for $\varepsilon$;

\item For every $T^\prime > a$, $T > a$,
and $\varepsilon\in \mathbb{R}\setminus\{0\}$,
there exists a sequence $\left(A(\varepsilon, T^\prime_n)\right)_{n \in \mathbb{N}}$
such that
$$
\displaystyle \lim_{n \rightarrow +\infty} A(\varepsilon, T^\prime_n)
= \displaystyle \inf_{T^\prime \geq T} A(\varepsilon, T^\prime)
$$
uniformly for $\varepsilon$.
\end{enumerate}
Then $x_\ast$ satisfies the Euler--Lagrange equation
$$
\sum_{i=0}^{r} (-1)^i \left(\partial_{i+2} L\right)^{(i)}\left(t,
x(t), x^\prime(t), \ldots, x^{(r)}(t)\right) =0
\quad \forall t \in [a,+\infty[\, ,
$$
and the $r$ transversality conditions
\begin{multline*}
\lim_{T\rightarrow+\infty} \inf_{T^\prime \geq T}
\Biggl\{\bigg( \partial_{r+2-(k-1)}
L\left(t,x(t), x^\prime(t), \ldots, x^{(r)}(t)\right)\\
+ \sum_{i=1}^{k-1} (-1)^{i}
\left(\partial_{r+2-(k-1)+i} L\right)^{(i)}(t,x(t), x^\prime(t), \ldots, x^{(r)}(t)) \bigg)
\cdot x^{(r-k)}(T^\prime)\Biggr\} =0,
\end{multline*}
$k=1, \ldots, r$.
\end{corollary}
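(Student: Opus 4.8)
The plan is to obtain this corollary directly from Theorem~\ref{main:result} by specializing the time scale to $\mathbb{T}=\mathbb{R}$. First I would record that $\mathbb{R}$ satisfies condition $(H)$ with $a_1=1$ and $a_0=0$ (as observed in Remark~\ref{rem:rest:H}) and that $\sup\mathbb{R}=+\infty$, so the structural hypotheses on the time scale required by Theorem~\ref{main:result} hold.

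Next I would translate the time-scale objects into their classical counterparts. On $\mathbb{R}$ one has $\sigma=\mathrm{id}$, $\mu\equiv 0$, the delta derivative is the ordinary derivative, the delta integral is the Riemann integral, and $C_{rd}=C$; hence $x^{\sigma^{r-i}\Delta^{i}}=x^{(i)}$ for every $i$, and the operator $\langle x\rangle^{r}(t)$ collapses to $\bigl(t,x(t),x'(t),\dots,x^{(r)}(t)\bigr)$. Under this dictionary the variational problem \eqref{problem} becomes \eqref{problem1}, and the quantities $A(\varepsilon,T')$, $V(\varepsilon,T)$, $V(\varepsilon)$, together with hypotheses \emph{1}--\emph{3}, are exactly those in the corollary. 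The discrepancy between the admissibility class $C^{2r}_{rd}$ of \eqref{problem} and the class $C^{r}$ of \eqref{problem1} is harmless: since $\sigma=\mathrm{id}$, every composition appearing in the proofs of Lemma~\ref{integration-parts-higher-order} through Lemma~\ref{Fund.Lemma.3} and of Theorem~\ref{main:result} reduces to an ordinary derivative of $x$ (or of a variation $\eta$) of order at most $r$, so $C^{r}$ regularity already suffices throughout; moreover a maximizer over the larger class $C^{r}$ is \emph{a fortiori} a maximizer over $C^{2r}\subset C^{r}$, so the conclusions of Theorem~\ref{main:result} are applicable.

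Finally I would substitute $a_1=1$ into those conclusions. All the weights $(1/a_1)^{i(i-1)/2}$ equal $1$, so the Euler--Lagrange equation \eqref{E-L-equation} becomes $\sum_{i=0}^{r}(-1)^i(\partial_{i+2}L)^{(i)}\bigl(t,x(t),\dots,x^{(r)}(t)\bigr)=0$ for all $t\in[a,+\infty[$. Likewise $\Psi_i^r(k)=\prod_{j=1}^{i}1^{\,r-(k-1)+(j-1)}=1$ and $x^{\sigma^{k-1}\Delta^{r-k}}(T')=x^{(r-k)}(T')$, so the $r$ transversality conditions \eqref{tranversality} turn into the displayed ones for $k=1,\dots,r$, which finishes the argument.

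I do not anticipate a genuine obstacle here: the only points needing attention are the bookkeeping around the two function-space conventions just discussed and the routine check that each power of $1/a_1$ and each $\Psi_i^r(k)$ degenerates to $1$ when $\mathbb{T}=\mathbb{R}$; everything else is a transcription of Theorem~\ref{main:result}.
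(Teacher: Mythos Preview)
Your proposal is correct and matches the paper's approach: the paper simply states that the corollary is obtained by taking $\mathbb{T}=\mathbb{R}$ in Theorem~\ref{main:result} and gives no further details, while you spell out the translation ($\sigma=\mathrm{id}$, $a_1=1$, $\Psi_i^r(k)=1$, etc.) and address the $C^{2r}$ versus $C^r$ bookkeeping explicitly. There is nothing to correct.
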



\section{Illustrative Examples}
\label{sec:il:ex}

The following two examples illustrate the usefulness
of Theorem~\ref{main:result}.

\begin{ex}
\label{ex:1}
Let $\mathbb{T}$ be a time scale satisfying
condition $(H)$ and such that
$\sup \mathbb{T}=+\infty$
and $0 \in \mathbb{T}$. Consider the problem
\begin{equation}
\label{example:1}
\begin{gathered}
\int_{0}^{+\infty} - \left(x^{\Delta^2}(t)\right)^2 \Delta t \longrightarrow \max\\
x \in C_{rd}^4\left([0,+\infty[, \mathbb{R}\right)\\
x(0)=0 , \quad x^\Delta(0)= 1.
\end{gathered}
\end{equation}
By Theorem~\ref{main:result}, if $x_\ast$ is a maximizer to problem
\eqref{example:1}, then $x_\ast$ satisfies the Euler--Lagrange equation
\begin{equation*}
\partial_2 L\langle x \rangle^2(t) - (\partial_3 L)^\Delta \langle x \rangle^2(t)
+\frac{1}{a_1} (\partial_4 L)^{\Delta^2} \langle x \rangle^2(t) = 0
\quad \forall t \in [0,+\infty[ \, .
\end{equation*}
Since
\begin{equation*}
\partial_2L\langle x \rangle^2(t) =0, \quad \partial_3L\langle x \rangle^2(t) =0,
\quad \partial_4 L\langle x \rangle^2(t) =-2 x^{\Delta^2}(t),
\end{equation*}
then the Euler--Lagrange equation is
\begin{equation}
\label{ex:1-1}
x^{\Delta^4}(t) =0 \quad \forall t \in [0,+\infty[.
\end{equation}
Clearly,
$$
x_\ast(t)=c_1t^3 + c_2t^2 + c_3 t+ c_4,
$$
where $c_1, c_2, c_3,c_4 \in \mathbb{R}$,
is the solution of \eqref{ex:1-1}.
Using the initial conditions $x_\ast(0)=0$ and $x_\ast^\Delta(0)= 1$,
we get $c_4=0$ and $c_3=1-c_2a_0-c_1a_0^2$.
Using the two transversality conditions, we will
determine the value of $c_1$ and $c_2$. Since
$$
\lim_{T\rightarrow+\infty} \inf_{T^\prime
\geq T}\left\{\left(\partial_3 L\langle x_\ast \rangle^2(T^\prime)
- \frac{1}{a_1} \left(\partial_4 L\right)^{\Delta}\langle x_\ast
\rangle^2(T^\prime) \right)\cdot x_\ast^{\sigma}(T^\prime)\right\}=0
$$
is equivalent to
\begin{multline*}
\lim_{T\rightarrow+\infty} \inf_{T^\prime \geq T}\Bigl\{
\left(c_1 (a_1 T^\prime+ a_0)^3
+ c_2(a_1 T^\prime+ a_0)^2 + (1-c_2a_0-c_1a_0^2)(a_1 T^\prime+ a_0)\right)\\
\times c_1(1+a_1^2+a_1)(1+a_1) \Bigr\} =0,
\end{multline*}
then we conclude that $c_1=0$.
Using the transversality condition
$$
\lim_{T\rightarrow+\infty} \inf_{T^\prime \geq T}\left\{
\partial_4 L\langle x_\ast\rangle^2(T^\prime) \cdot x_\ast^{\Delta}(T^\prime)\right\}=0,
$$
that is,
$$
\lim_{T\rightarrow+\infty} \inf_{T^\prime \geq T}\left\{-2c_2(1+a_1)
\cdot \left(c_2(T^\prime+a_1T^\prime+a_0)+1-c_2a_0\right)  \right\}=0,
$$
we conclude that $c_2=0$. Hence, $x_\ast(t)=t$ is a candidate to be
a maximizer to problem \eqref{ex:1-1}. Since
\begin{multline*}
\lim_{T\rightarrow+\infty} \inf_{T^\prime \geq T}
\int_0^{T^\prime}\left[L(t, x^{\sigma^2}(t), x^{\sigma\Delta}(t), x^{\Delta^2}(t))
- L(t, x_\ast^{\sigma^2}(t), x_\ast^{\sigma\Delta}(t), x_\ast^{\Delta^2}(t))\right] \Delta t\\
= \lim_{T\rightarrow+\infty} \inf_{T^\prime
\geq T}\int_0^{T^\prime}\left[ -(x^{\Delta^2}(t))^2 \right] \Delta t \leq 0
\end{multline*}
for every admissible function $x$, then $x_\ast$
is indeed a solution to problem \eqref{ex:1-1}.
\end{ex}

In what follows, we use the standard notation of
\emph{quantum calculus} (see, \textrm{e.g.}, \cite{KacCheung}):
$$
D_q [y](t):=\displaystyle\frac{y(qt)-y(t)}{(q-1)t}
\ \quad \mbox{and} \quad D_q^{2} [y](t):=D_q [D_q[y]](t).
$$

\begin{ex}
\label{ex:2}
Fix $q>1$ and let $\mathbb{T}=q^{\mathbb{N}_0}$.
Consider the following non-autonomous problem:
\begin{equation}
\label{example:2}
\begin{gathered}
\int_{1}^{+\infty}
- t \left(1+ \left(D_q^2[x](t)\right)^2\right) \ d_q t \longrightarrow \max\\
x(1)=\alpha \, , \quad D_q[x](1)=\beta,
\end{gathered}
\end{equation}
where $\alpha$ and $\beta$ are fixed real numbers.
By Theorem~\ref{main:result}, if $x_\ast$ is a maximizer to problem
\eqref{example:2}, then $x_\ast$ must satisfy the Euler--Lagrange equation
\begin{equation*}
\partial_2 L\langle x \rangle^2(t) - D_q[\partial_3 L] \langle x \rangle^2(t)
+ \frac{1}{q} D_q^2[\partial_4 L] \langle x \rangle^2(t)  =0
\quad \forall t \in [1,+\infty[ \, .
\end{equation*}
Since
\begin{equation*}
\partial_2L\langle x \rangle^2(t) =0, \quad \partial_3L\langle x \rangle^2(t) =0,
\quad \partial_4 L\langle x \rangle^2(t) = -2 t D_q^2[x](t),
\end{equation*}
then the Euler--Lagrange equation takes the form
\begin{equation}
\label{ex:2-1}
D_q^2\left[2 t \, D_q^2[x]\right](t) = 0
\quad \forall t \in [1,+\infty[ \, .
\end{equation}
It is easy to see that $x_\ast(t)=k_1t^2+ k_2t+ k_3t \ln t +k_4$
is the general solution of equation \eqref{ex:2-1}.
Using the initial conditions we obtain
$$
k_2 = \beta- k_1(1+q) - k_3\frac{q}{q-1}\ln q
$$
and
$$
k_4=-\beta + k_1q+k_3\frac{q}{q-1}\ln q + \alpha.
$$
Using the transversality condition
$$
\lim_{T\rightarrow+\infty} \inf_{T^\prime \geq T}
\left\{\left(\partial_3 L\langle x_\ast \rangle^2(T^\prime)
- \frac{1}{q} D_q[\partial_4 L]\langle x_\ast \rangle^2
(T^\prime) \right)\cdot x_\ast(qT^\prime)\right\}=0
$$
we get $k_3=0$. The transversality condition
$$
\lim_{T\rightarrow+\infty} \inf_{T^\prime \geq T}
\left\{\partial_4 L\langle x_\ast\rangle^2(T^\prime) \cdot D_q[x_\ast](T^\prime)\right\}=0
$$
implies that $k_1=0$. Hence, $x_\ast(t)=\beta t-\beta + \alpha$
is a candidate to be a maximizer. Using the definition of weak maximality,
we conclude that $x_\ast(t)=\beta t-\beta + \alpha$
is indeed a maximizer to problem \eqref{example:2}.
\end{ex}


\section{Concluding Remarks}
\label{sec:conc}

We have established Euler--Lagrange and transversality optimality
conditions for higher-order infinite horizon variational
problems on a time scale. The results were obtained for
weakly optimal solutions in the Brock sense.
The main result is Theorem~\ref{main:result}, which
generalizes the recent results of \cite{MMT-2010} and \cite{naty:irlanda}.
Moreover, the new necessary optimality conditions improve
the classical results both in the continuous and discrete settings:
if one chooses the time scale to be the set of real numbers, then
Theorem~\ref{main:result} improves the continuous results of \cite{Nitta-et-all-2009};
if one chooses the time scale to be the set of integers, then Theorem~\ref{main:result}
improves the discrete-time results of \cite{Nitta-et-all-2010}.



\end{document}